\documentclass[final,hidelinks,onefignum,onetabnum]{siamart220329}

\usepackage{amscd,amssymb, amsmath,amsfonts, wasysym, mathrsfs, enumerate, mathtools,hhline,xcolor}%
\usepackage{graphicx}
\usepackage[all, cmtip]{xy}
\usepackage{multirow}

\definecolor{hot}{RGB}{65,105,225}
\usepackage{algorithm}
\usepackage[noend]{algpseudocode}

\usepackage{hyperref}
\hypersetup{
	plainpages=false,
    colorlinks,
    linktocpage=true,
    linkcolor=hot,
    citecolor=hot,
    urlcolor=hot
}
\usepackage[hyperpageref]{backref}

\usepackage[colorinlistoftodos,prependcaption,textsize=footnotesize]{todonotes}

\makeatletter
\@mparswitchfalse%
\makeatother
\normalmarginpar %

\newtheorem{remark}[theorem]{Remark}

\begin{document}

\title{A numerical domain decomposition method for solving elliptic equations on manifolds}

\author{Shuhao Cao \thanks{Division of Computing, Analytics, and Mathematics, School of Science and Engineering, University of Missouri-Kansas City, Kansas City, MO
  (\email{scao@umkc.edu}).}
\and Lizhen Qin \thanks{Mathematics Department, Nanjing University, Nanjing, Jiangsu, China
  (\email{qinlz@nju.edu.cn}).}}

\maketitle

\headers{DDM on Manifolds}{S. Cao and L. Qin}

\begin{abstract}
A new numerical domain decomposition method is proposed for solving elliptic equations on compact Riemannian manifolds. The advantage of this method is to avoid global triangulations or grids on manifolds. Our method is numerically tested on some $4$-dimensional manifolds such as the unit sphere $S^{4}$, the complex projective space $\mathbb{CP}^{2}$ and the product manifold $S^{2} \times S^{2}$.
\end{abstract}

\begin{AMS}{Primary 65N30; Secondary 58J05, 65N55.}
\end{AMS}

\begin{keywords}{Riemannian manifolds, elliptic problems, domain decomposition methods, finite element methods}
\end{keywords}

\section{Introduction}\label{sec_introduction}
Elliptic partial differential equations on Riemannian manifolds are of fundamental importance both in analysis and geometry (see e.g., \cite{schoen_yau,jost}). A simple and important example is
\begin{equation}
\label{eqn_laplace_problem}
- \Delta u + bu = f.
\end{equation}
Here $\Delta$ is the Laplace-Beltrami operator, or Laplacian for brevity, defined on a $d$-dimensional Riemannian manifold $M$. Many manifolds are naturally submanifolds of Euclidean spaces, here the ``dimension'' of a submanifold is referred to as the topological dimension of the manifold, not the dimension of its ambient Euclidean space. For example, the $n$-dimensional unit sphere $S^n$ is embedded in $\mathbb{R}^{n+1}$.

When the manifold $M$ is a two-dimensional Riemannian submanifold of $\mathbb{R}^{3}$, i.e. a surface, the numerical methods to solve PDEs, particularly \eqref{eqn_laplace_problem}, on $M$ have been extensively studied for a long history (see e.g. \cite{nedelec_planchard,nedelec,baumgardner_frederickson,dziuk88,dziuk91,dziuk_elliott,olshanskii2009finite}). Over several decades, among many others, the surface finite element method and its variant have had far-reaching developments (see e.g., \cite{antonietti2015high,bachini2021arbitrary,demlow2009higher,demlow2007adaptive,frittelli2018virtual,jankuhn2021error,olshanskii2017trace,reusken2015analysis}), and applications to various PDEs (see e.g. \cite{bachini2021intrinsic,bonito2020divergence,brannick2022bootstrap,dziuk2007finite,elliott2015evolving,jin2021gradient}); see also e.g. \cite{DDE05,dziuk_elliott,BDN20} for surveys and bibliographies, and e.g. \cite{2021MFEM,2007dealii,2000CGAL} for software developments. They have also been widely applied to various areas such as computer graphics (e.g. \cite{etzmuss2003fast}), surface fitting (e.g. \cite{Dobrev2021}), shape analysis (e.g. \cite{dobrev2010surface,reuter2010hierarchical,reuter2009discrete}), isogeometric topology optimization (e.g. \cite{juttler2016numerical,kang2016isogeometric}), and medical imaging (e.g. \cite{mohamed2005finite}). A basic and common feature of these finite element methods is to construct a global triangulation of $M$, which provides a necessary grid structure for a \emph{global} finite element space. This triangulation, following \cite[Definition~8.3]{munkres} in the sense of differential topology, is a bijection between a simplicial complex and the manifold $M$, which satisfies certain regularity. In computation, a concrete representation of this triangulation is to approximate $M$ by a polyhedron in $\mathbb{R}^{3}$. Alternatively, $M$ is represented implicitly as a level set of a function $\phi$, say $M= \phi^{-1} (0)$, and an equation on $M$ can be solved by the methods of trace elements or implicit surfaces elements (e.g. \cite{olshanskii2017trace,reusken2015analysis,dziuk_elliott}).

However, there are several essential difficulties to apply the methods above to general higher dimensional manifolds. First, many important and interesting examples of higher dimensional manifolds are not submanifolds of Euclidean spaces by definition.
A notable example is the complex projective spaces $\mathbb{CP}^{n}$, which serve as the foundation for algebraic geometry.
Though Whitney Embedding Theorem (\cite{whitney36}, see also \cite[p.~24-27]{hirsch}) and Nash Embedding Theorem (\cite{nash}) do reveal that every smooth manifold can be embedded into a Euclidean space $\mathbb{R}^{k}$ differential-topologically or even geometrically for some large $k$.
To the best of our knowledge, there is no literature on a universal algorithm to efficiently construct such an explicit embedding for computation on general higher dimensional manifolds.
Second, assuming $M$ is a submanifold of $\mathbb{R}^{k}$, if the codimension of the submanifold $M$ in $\mathbb{R}^{k}$ is greater than $1$, which is often the case, it will be horribly difficult to find an effective polytopal approximation to $M$ in general due to topological and geometrical complexity, meanwhile, $M$ also cannot be represent as a level set of a function.

A global triangulation of a manifold is helpful in numerical computation, since it can provide a global discretization of the target problem. We do acknowledge that J.~H.~C.~Whitehead proved (\cite{whitehead}, see also \cite[Theorem~10.6]{munkres}) that every smooth manifold can be globally triangulated in an abstract way. However, to the best of our knowledge, in practice, there has been no algorithm to build such a concrete triangulation or a grid over a high-dimensional manifold in general. To circumvent this difficulty, in \cite{qin_zhang_zhang} which serves as our major inspiration, Qin--Zhang--Zhang proposed a new idea to numerically solve elliptic PDEs on manifolds by avoiding global triangulations completely.
Since a $d$-dimensional manifold $M$ has local coordinate charts by definition, $M$ can be decomposed into finitely many
subdomains that carry local Cartesian coordinates. Consequently, an elliptic equation on each subdomain can be transformed to one on a domain in $\mathbb{R}^{d}$. Thus an elliptic problem on $M$ can be assembled to either coupled problems on Euclidean domains, or can be solved directly by Domain Decomposition Methods (DDMs). This idea had been numerically verified on the $3$-dimensional unit sphere $S^{3}$ in \cite{qin_zhang_zhang}, where $S^{3}$ is decomposed into two subdomains. However, a major drawback of \cite{qin_zhang_zhang} is its lack of flexibility to deal with more general manifolds, whose charts may involve more than two subdomains.

In this paper, we shall develop the idea in \cite{qin_zhang_zhang} further to solve problems on general manifolds. Similar to \cite{qin_zhang_zhang}, global triangulations or grids shall be completely avoided. In fact, we shall solve such problems by an overlapping domain decomposition method (DDM). The development of DDM has a long history. It was first invented by H.~A.~Schwarz \cite{schwarz}. The version of DDM we mainly follow was originally proposed by P.~L.~Lions in \cite[I.~4]{lions1} for solving continuous problems in Euclidean spaces. It has been well-developed and is later named as Multiplicative Schwarz Method (c.f. \cite{toselli_widlund}).
The later development usually takes this DDM as a preconditioner for a globally discretized problem (see e.g. \cite{BPWX,engquist1998absorbing},
and pure algebraic versions in e.g. \cite{1996CaiSaadOverlapping,2017LiSaadLow}).
However, a globally discretized problem should be based on a global grid on a manifold $M$, which is not accessible in our case.
Therefore, we shall more closely follow Lions' original approach rather than the later development. This original approach is a very simple iteration scheme such that a local problem in a subdomain is solved in each step. We found that this DDM can be well-adapted to solve problems on manifolds. As in \cite{qin_zhang_zhang}, a problem in each subdomain can be converted to one in a Euclidean domain. It is simpler to obtain a grid and hence a discretization over each Euclidean domain. To minimize the difficulty of coding, these Euclidean domains can be even chosen as (high-dimensional) rectangles. The transition of information among these Euclidean domains is by virtue of the transition maps of coordinates.

Overall, our approach is a discrete imitation of Lions' method with additional transition maps of coordinates. It is necessary to point out that the transition maps in the proposed method are not required to preserve nodes or grids. More precisely, in computations, suppose there are two Euclidean domains $D_i$ and $D_j$. Both $D_i$ and $D_j$ carry a grid, respectively. If $\phi_{ji}: D_{i} \rightarrow D_{j}$ is such a transition map between $D_{i}$ and $D_{j}$, the image of the grid on $D_i$ under $\phi_{ji}$ may not match the grid on $D_j$. Furthermore, for a node $\xi \in D_i$, $\phi_{ji}(\xi)$ may not be a node anymore on $D_j$ (see Fig. \ref{fig:phiij} for an example).
Nevertheless, this incompatibility shows the high flexibility of our approach. In fact, if all transition maps preserved grids, we would obtain a global grid on $M$ which is practically almost impossible. The transition maps in \cite[Section~4]{qin_zhang_zhang} do not preserve grids but do preserve boundary nodes on each subdomain, i.e., the $\phi_{ji}$ above maps nodes on $\partial D_{i}$ to nodes of $D_{j}$. As a result, the methods therein have some flexibility to solve PDEs on $S^{3}$. In order to handle more general high-dimensional manifolds, this paper improves the flexibility further. Particularly, interpolation techniques are employed now to handle nonmatching grids.

The proposed approach is numerically verified on closed manifolds of dimension $4$. They are $4$-dimensional unit sphere $S^{4}$, the complex projective space $\mathbb{CP}^{2}$, and the product manifold $S^{2} \times S^{2}$. The numerical results show that our method solves equation \eqref{eqn_laplace_problem} on these manifolds in a natural manner.

The outline of this paper is as follows. In Section \ref{sec_continuous}, we shall extend P.~L.~Lions' method in \cite{lions1} to a DDM for continuous problems on manifolds and prove its convergence. In Section \ref{sec_implement}, we shall propose our discrete imitation of Lions' method. Some specialty of product manifolds will be explained in Section \ref{sec_product}. Finally, some numerical results will be presented in Section \ref{sec_experiment}.

\section{Theory on Continuous Problems}\label{sec_continuous}
In this section, we shall first formulate a second order elliptic model problem on general manifolds. Then we introduce a domain decomposition method (DDM) generalized from \cite[I.~4]{lions1} to solve the model problem (see Algorithm \ref{alg_continuous} below).
We shall formulate and prove Theorem \ref{thm_convergence} below on the convergence of this iterative procedure. This procedure also motivates us to propose Algorithm \ref{alg_discrete} in next section, which gives numerical approximations to the solution to the model problem.

Let $M$ be a $d$-dimensional compact smooth manifold without or with boundary $\partial M$. Equipping $M$ with a Riemannian metric $g$, the Laplacian $\Delta$
can be defined on $M$. In general, neither $g$ nor $\Delta$ can be expressed by coordinates globally because $M$ does not necessarily have a global coordinate chart. In a local chart with coordinates $(x_{1}, \dots, x_{d})$, the Riemannian metric tensor $g$ is expressed as
\begin{equation}
\label{eqn_metric}
g= \sum_{\alpha, \beta=1}^{d} g_{\alpha \beta} \mathrm{d} x_{\alpha} \otimes \mathrm{d} x_{\beta},
\end{equation}
where the matrix $(g_{\alpha \beta})_{d \times d}$ is symmetric and positive definite. The Laplacian $\Delta$ can be then expressed in this chart as
\[
\Delta u = \frac{1}{\sqrt{G}}\sum_{\alpha=1}^{d}\frac{\partial}{\partial x_{\alpha}} \left( \sum_{\beta=1}^{d}g^{\alpha \beta} \sqrt{G} \frac{\partial u}{\partial x_{\beta}} \right),
\]
where $G = \det \left( (g_{\alpha \beta})_{d \times d} \right)$ is the determinant of the matrix $(g_{\alpha \beta})_{d \times d}$ and $(g^{\alpha \beta})_{d \times d}$ is the inverse of $(g_{\alpha \beta})_{d \times d}$. It is well-known that $\Delta$ is an elliptic differential operator of second order.

We consider the following model problem on $M$
\begin{equation}\label{eqn_problem}
\left\{
\begin{aligned}
- \Delta u + bu & = f,
\\
u|_{\partial M} & = 0,
\end{aligned}
\right.
\end{equation}
where $b \geq 0$ is a constant and $f \in L^{2} (M)$. A weak solution to \eqref{eqn_problem} is a solution to the following problem: Find a $u \in H^{1}_{0} (M)$ such that, $\forall v \in H^{1}_{0} (M)$,
\begin{equation}\label{eqn_problem_weak}
\int_{M} (\langle \nabla u, \nabla v \rangle + buv)\, \mathrm{dvol} = \int_{M} fv \,\mathrm{dvol}.
\end{equation}
Here $\nabla u$ and $\nabla v$ are the gradients of $u$ and $v$ with respect to $g$, $\langle \nabla u, \nabla v \rangle$
is the inner product of $\nabla u$ and $\nabla v$, and $\mathrm{dvol}$ is the volume form. In terms of local coordinates,
\begin{equation}\label{eqn_gradient_product}
\langle \nabla  u, \nabla  v \rangle = \sum_{\alpha,\beta=1}^{d} g^{\alpha \beta} \frac{\partial u}{\partial x_{\alpha}} \frac{\partial v}{\partial x_{\beta}}
\end{equation}
and
\begin{equation}\label{eqn_volume}
\mathrm{dvol} = \sqrt{G} \mathrm{d} x_{1} \cdots \mathrm{d} x_{d}.
\end{equation}
For brevity, we shall omit the symbol $\mathrm{dvol}$ in integrals on $M$ throughout this paper.

Note that it suffices to solve \eqref{eqn_problem_weak} on each component of $M$. Therefore, without loss of generality, $M$ is assumed to be connected. In addition, if $\partial M = \emptyset$, the condition above $u|_{\partial M} =0$ is vacuously satisfied in regards to $H^{1}_{0} (M) = H^{1} (M)$. To guarantee \eqref{eqn_problem_weak} is well-posed, we further assume $b>0$ if $\partial M = \emptyset$. (Actually, if $b=0$, one may imposed additional conditions such as $\int_{M} u \mathrm{dvol} =0$ to guarantee the well-posedness. On the other hand, the numerical algorithm would be more complicated in this situation. We shall study the case later.)

Now we describe a domain decomposition iterative procedure to solve \eqref{eqn_problem_weak}.
This method was originally proposed by P.~L.~Lions in \cite[I.~4]{lions1}, in which the classical Schwarz Alternating Method is extended to an iterative procedure with many subdomains.
The manifold nature of this method is intrinsically adapted to solve PDEs on manifolds. More precisely, suppose $M$ is decomposed into $m$ subdomains, i.e.
\[
M = \bigcup_{i=1}^{m} \mathrm{Int} M_{i}.
\]
Here $M_{i}$ is a closed subdomain (submanifold with codimension $0$) of $M$ with Lipschitz boundary, and $\mathrm{Int} M_{i}$ is the interior of $M_{i}$ in the sense of point-set topology of $M$. Clearly, an element in $H^{1}_{0} (M_{i})$ can be naturally considered as an element in $H^{1}_{0} (M)$ by zero extension. Thus, $H^{1}_{0} (M_{i}) \subseteq H^{1}_{0} (M)$. The iterative procedure to solve \eqref{eqn_problem_weak} is the following Algorithm \ref{alg_continuous}.

\medskip
\begin{algorithm}[ht]
\caption{A DDM for the continuous problem.}
\label{alg_continuous}

\begin{algorithmic}[1]
\State%
Choose an arbitrary initial guess $u^{0} \in H^{1}_{0} (M)$ for \eqref{eqn_problem_weak}.

\State%
For each $n>0$, assuming $u^{n-1}$ has been obtained, define $u^{n,0} = u^{n-1}$. For $1 \leq i \leq m$, assuming $u^{n,j}$ has been obtained for all $j<i$, find a $u^{n,i} \in H^{1}_{0} (M)$ such that
\begin{equation}\label{alg_continuous_1}
\left\{
\begin{array}{rcl}
(\forall v \in H^{1}_{0} (M_{i})) \ \ \ \int_{M_{i}} \langle \nabla u^{n,i}, \nabla v \rangle + bu^{n,i} v & = & \int_{M_{i}} fv, \\
u^{n,i}|_{M \setminus \mathrm{Int} M_{i}} & = & u^{n,i-1}|_{M \setminus \mathrm{Int} M_{i}}.
\end{array}
\right.
\end{equation}

\State%
Let $u^{n} = u^{n,m}$.

\end{algorithmic}
\end{algorithm}

\medskip
To obtain the $u^{n,i}$ in \eqref{alg_continuous_1}, one first solve an elliptic problem on $M_{i}$ with Dirichlet boundary condition $u^{n,i}|_{\partial M_{i}} = u^{n,i-1}|_{\partial M_{i}}$, then extend the solution to a function $u^{n,i}$ on $M$ by defining $u^{n,i}|_{M \setminus M_{i}} = u^{n,i-1}|_{M \setminus M_{i}}$. Thus the $u^{n,i}$ is well-defined and hence Algorithm \ref{alg_continuous} is well-posed.

We have the following theorem on the geometrical convergence of Algorithm \ref{alg_continuous}.
\begin{theorem}\label{thm_convergence}
There exist constants $C_{0} >0$ and $L \in [0,1)$ such that, $\forall u^{0} \in H^{1}_{0} (M)$, $\forall n>0$,
\[
\| u- u^{n} \|_{H^{1}_{0} (M)} \leq C_{0} L^{n} \| u- u^{0} \|_{H^{1}_{0} (M)},
\]
where $u$ is the solution to \eqref{eqn_problem_weak} and $u^{n}$ is the $n$th iterated approximation in Algorithm \ref{alg_continuous} with initial guess $u^{0}$.
\end{theorem}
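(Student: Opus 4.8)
\emph{Proof proposal.}
The plan is to recast Algorithm \ref{alg_continuous} as a stationary linear iteration in the energy inner product and then invoke the classical convergence theory for the multiplicative Schwarz method; the only genuine work is to check that its hypotheses survive the passage to a manifold. Write $a(u,v) = \int_{M} \langle \nabla u, \nabla v\rangle + buv$. Since $g$ is smooth and $M$ is compact, $a$ is bounded on $H^{1}_{0}(M)$; since either $b>0$ (when $\partial M = \emptyset$) or the Poincar\'e inequality holds on the compact manifold with boundary (when $\partial M \neq \emptyset$), $a$ is coercive. Hence $a$ is an inner product on $V := H^{1}_{0}(M)$ whose norm $\|\cdot\|_{a}$ is equivalent to $\|\cdot\|_{H^{1}_{0}(M)}$, so it suffices to prove the stated estimate with $\|\cdot\|_{H^{1}_{0}(M)}$ replaced by $\|\cdot\|_{a}$ and with $C_{0}$ allowed to swallow the equivalence constants. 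Set $V_{i} := H^{1}_{0}(M_{i}) \subseteq V$ (the zero-extension embedding recalled before the algorithm), a closed subspace of $V$, and let $P_{i} : V \to V_{i}$ be the $a$-orthogonal projection; that $P_{i}$ is well defined amounts to Lax--Milgram for $a$ restricted to $M_{i}$, which holds because $M_{i}$ has Lipschitz boundary (or $b>0$).

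Next I would identify the error propagation. Fix $n$ and abbreviate $e^{n,i} = u - u^{n,i}$. Testing \eqref{eqn_problem_weak} against $v \in V_{i}$ and using that $v$ vanishes off $\mathrm{Int}\,M_{i}$ gives $a(u,v) = \int_{M_{i}} fv$; subtracting the first line of \eqref{alg_continuous_1} yields $a(e^{n,i}, v) = 0$ for all $v \in V_{i}$. Moreover $u^{n,i} - u^{n,i-1}$ lies in $H^{1}(M)$ and vanishes a.e.\ off $\mathrm{Int}\,M_{i}$, so by the Lipschitz regularity of $\partial M_{i}$ it belongs to $V_{i}$; thus $e^{n,i}$ differs from $e^{n,i-1}$ by an element of $V_{i}$ and is $a$-orthogonal to $V_{i}$. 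These two properties characterize $e^{n,i} = (I - P_{i})\,e^{n,i-1}$. Since $u^{n,0} = u^{n-1}$ and $u^{n} = u^{n,m}$, iterating over $i$ gives $u - u^{n} = E\,(u - u^{n-1})$ with $E := (I - P_{m})(I - P_{m-1})\cdots(I - P_{1})$, hence $\|u - u^{n}\|_{a} \leq \|E\|_{a}^{\,n}\,\|u - u^{0}\|_{a}$, and the theorem reduces to the single bound $\|E\|_{a} = L < 1$.

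For that bound I would use the two standard ingredients of the abstract Schwarz theory. First, a stable subspace decomposition: because $M$ is compact and $\{\mathrm{Int}\,M_{i}\}_{i=1}^{m}$ is an open cover, there is a smooth partition of unity $\{\chi_{i}\}$ with $\operatorname{supp}\chi_{i} \subseteq \mathrm{Int}\,M_{i}$; for $v \in V$ one has $v = \sum_{i} \chi_{i} v$ with $\chi_{i} v \in V_{i}$, and the Leibniz rule together with the uniform bounds on the finitely many smooth functions $\chi_{i}$ and $\nabla \chi_{i}$ on the compact $M$ yields $\sum_{i} \|\chi_{i} v\|_{a}^{2} \leq C_{*}\,\|v\|_{a}^{2}$ with $C_{*}$ independent of $v$. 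Second, the elementary telescoping identity for products of orthogonal projections, $\|v\|_{a}^{2} - \|Ev\|_{a}^{2} = \sum_{i=1}^{m} \|P_{i} w_{i-1}\|_{a}^{2}$ where $w_{0} = v$ and $w_{i} = (I - P_{i})w_{i-1}$. Combining these with a Cauchy--Schwarz juggling exactly as in the classical estimate (see \cite{lions1} or \cite{toselli_widlund}) bounds $\sum_{i} \|P_{i} w_{i-1}\|_{a}^{2}$ below by $\delta\|v\|_{a}^{2}$ for some $\delta>0$ depending only on $C_{*}$ and $m$, so $\|E\|_{a}^{2} \leq 1 - \delta =: L^{2} < 1$. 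Converting back to the $H^{1}_{0}$-norm then produces the claimed geometric decay.

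I expect the main obstacle — indeed the only point at which the geometry really enters — to be the stable-decomposition constant $C_{*}$: one must ensure that multiplication by a partition-of-unity function is a bounded operation on $H^{1}_{0}(M)$ with a constant under control, which is precisely where smoothness of $g$ and compactness of $M$ are used, and one must also confirm well-posedness of the local solves in \eqref{alg_continuous_1} so that each $P_{i}$ genuinely exists. Once the problem has been transported into the Hilbert space $(V,a)$, everything that remains is the purely operator-theoretic multiplicative-Schwarz estimate, which is insensitive to the fact that the underlying domain is a manifold rather than a Euclidean region.
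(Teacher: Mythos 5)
Your proposal is correct and follows essentially the same route as the paper: establish the equivalence of the energy inner product $a(\cdot,\cdot)$ with the $H^{1}_{0}(M)$ norm, identify the error propagation $u-u^{n}=\bigl((I-P_{m})\cdots(I-P_{1})\bigr)^{n}(u-u^{0})$, and obtain the decomposition $H^{1}_{0}(M)=\sum_{i}H^{1}_{0}(M_{i})$ from a partition of unity subordinate to $\{\mathrm{Int}\,M_{i}\}$. The only difference is that where the paper concludes $\|E\|_{a}<1$ by citing the abstract projection lemma (Lemma \ref{lem_projection}, from \cite{lions1} and \cite{xu_zikatanov}), you reprove that lemma quantitatively via the stable-decomposition constant and the telescoping identity — a valid, slightly more self-contained variant of the same argument.
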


Theorem \ref{thm_convergence} was originally proved by P.~L.~Lions (\cite[Theorem.~I.2]{lions1}) in the case that $M$ is a Euclidean domain. We shall adapt his proof to the case of manifolds.

An elementary proof of the following lemma can be found in \cite[p.~17]{lions1}. It is also an immediate corollary of \cite[(1.2)]{xu_zikatanov}.
\begin{lemma}\label{lem_projection}
Suppose $V$ is a Hilbert space and $V_{i}$ ($1 \leq i \leq m$) are closed subspaces of $V$ such that $V = \sum_{i=1}^{m} V_{i}$. Then
\[
\| P_{V_{m}^{\bot}} P_{V_{m-1}^{\bot}} \cdots P_{V_{1}^{\bot}} \| < 1,
\]
where each $V_{i}^{\bot}$ is the orthogonal complement of $V_{i}$, and $P_{V_{i}^{\bot}}$ is the orthogonal projection of $V$ onto $V_{i}^{\bot}$.
\end{lemma}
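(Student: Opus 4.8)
The plan is to reduce the claim to a single quantitative estimate, namely $\|x\|^{2} - \|Ex\|^{2} \geq c\,\|x\|^{2}$ for some fixed $c>0$, where I abbreviate $E := P_{V_m^\bot} P_{V_{m-1}^\bot}\cdots P_{V_1^\bot}$. Since each $P_{V_i^\bot}$ is an orthogonal projection, $\|P_{V_i^\bot}\| \leq 1$ and hence $\|E\| \leq 1$ automatically; the entire content of the lemma is the strictness. First I would record the basic telescoping identity. Fix $x \in V$, set $x_0 = x$ and $x_i = P_{V_i^\bot} x_{i-1}$, so that $x_m = Ex$. The orthogonal splitting $x_{i-1} = P_{V_i} x_{i-1} + P_{V_i^\bot} x_{i-1}$ gives $\|x_{i-1}\|^{2} = \|P_{V_i} x_{i-1}\|^{2} + \|x_i\|^{2}$, and summing over $i$ telescopes to
\[
\|x\|^{2} - \|Ex\|^{2} = \sum_{i=1}^{m} \|P_{V_i} x_{i-1}\|^{2}.
\]
In particular $\|x_i\| \leq \|x_{i-1}\| \leq \cdots \leq \|x\|$ throughout.

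The decisive input is to convert the purely algebraic hypothesis $V = \sum_{i=1}^{m} V_i$ into a uniform bound. The map $\Phi \colon V_1 \times \cdots \times V_m \to V$, $(v_1,\dots,v_m) \mapsto \sum_i v_i$, is bounded and, by hypothesis, surjective between Hilbert spaces; the open mapping theorem therefore yields a constant $C_1 > 0$ such that every $x \in V$ admits a decomposition $x = \sum_{i=1}^{m} v_i$ with $v_i \in V_i$ and $\sum_{i=1}^{m} \|v_i\|^{2} \leq C_1 \|x\|^{2}$, a \emph{stable decomposition}. This is the only place the hypothesis is used, and it is the crux of the argument: in infinite dimensions the supremum defining $\|E\|$ need not be attained, so strictness cannot be read off from the mere absence of a unit fixed vector and must instead come from such a quantitative estimate.

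With these two ingredients I would argue by contradiction. Suppose $\|E\| = 1$ and pick unit vectors $x^{(k)}$ with $\|Ex^{(k)}\| \to 1$. By the telescoping identity, $\sum_{i=1}^{m} \|P_{V_i} x_{i-1}^{(k)}\|^{2} \to 0$, so each summand tends to $0$; since $x_i^{(k)} - x_{i-1}^{(k)} = -P_{V_i} x_{i-1}^{(k)}$, consecutive iterates coalesce and $\|x_{i-1}^{(k)} - x^{(k)}\| \to 0$ for every $i$. Using $\|P_{V_i}\| \leq 1$, this forces $\|P_{V_i} x^{(k)}\| \to 0$ for each $i$. Now I feed in the stable decomposition $x^{(k)} = \sum_i v_i^{(k)}$: because $v_i^{(k)} \in V_i$ we have $\langle x^{(k)}, v_i^{(k)}\rangle = \langle P_{V_i} x^{(k)}, v_i^{(k)}\rangle$, whence
\[
1 = \|x^{(k)}\|^{2} = \sum_{i=1}^{m} \langle P_{V_i} x^{(k)}, v_i^{(k)}\rangle \leq \Big(\sum_{i=1}^{m} \|P_{V_i} x^{(k)}\|^{2}\Big)^{1/2}\Big(\sum_{i=1}^{m} \|v_i^{(k)}\|^{2}\Big)^{1/2} \leq \sqrt{C_1}\,\Big(\sum_{i=1}^{m} \|P_{V_i} x^{(k)}\|^{2}\Big)^{1/2},
\]
whose right-hand side tends to $0$ --- a contradiction. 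Hence $\|E\| < 1$.

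The main obstacle, as flagged above, is precisely the passage from the algebraic identity $V = \sum V_i$ to a uniform stable-decomposition constant; everything else is orthogonality bookkeeping and Cauchy--Schwarz. If one prefers an explicit rate rather than an argument by contradiction, the same two ingredients can be packaged into the Xu--Zikatanov identity cited in the statement, which expresses $1 - \|E\|^{2}$ directly in terms of the optimal stable-decomposition constant; I would regard that as an equivalent route to the same estimate.
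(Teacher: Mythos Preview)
Your proof is correct. The telescoping identity, the appeal to the open mapping theorem to extract a stable-decomposition constant, and the contradiction via Cauchy--Schwarz are all sound; the one step worth spelling out is that $1=\sum_i\langle P_{V_i}x^{(k)},v_i^{(k)}\rangle$ may involve terms of either sign, but the subsequent bound goes through after passing to absolute values, which you implicitly do when applying Cauchy--Schwarz.

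As for comparison: the paper does not prove this lemma at all. It simply records that an elementary proof appears in Lions and that the statement is an immediate corollary of the Xu--Zikatanov identity. Your argument is a self-contained version of the former route; your closing remark that one could instead invoke the Xu--Zikatanov identity to get an explicit value of $1-\|E\|^2$ is exactly the second citation the paper gives. So you have supplied what the paper outsourced, and by essentially the same methods it points to.
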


Define the energy bilinear form on $H^{1}_{0} (M)$ as
\[
a(w,v) = \int_{M} \langle \nabla w, \nabla v \rangle + bwv.
\]

\begin{proof}[Proof of Theorem \ref{thm_convergence}]
It's easy to see that the energy norm $a(\cdot, \cdot)^{\frac{1}{2}}$ is equivalent to the original $H_{1}$-norm on $H^{1}_{0} (M)$. In other words, there are positive constants $C_{1}$ and $C_{2}$ such that $C_{1} a(\cdot, \cdot)^{\frac{1}{2}} \leq \| \cdot \|_{H^{1}_{0} (M)} \leq C_{2} a(\cdot, \cdot)^{\frac{1}{2}}$. Thus it suffices to show that: there exists $L \in [0,1)$ such that, $\forall n>0$,
\begin{equation}\label{thm_convergence_1}
a(u-u^{n}, u-u^{n})^{\frac{1}{2}} \leq L^{n} a(u-u^{0}, u-u^{0})^{\frac{1}{2}}.
\end{equation}
Actually, we will have $\| u- u^{n} \|_{H^{1}_{0} (M)} \leq C_{0} L^{n} \| u- u^{0} \|_{H^{1}_{0} (M)}$ with $C_{0} = C_{1}^{-1} C_{2}$. (See also \cite[Lemma~4.4]{qin_xu} and its proof.) Let $V_{i}$ denote $H^{1}_{0} (M_{i})$. By adapting the argument in \cite[Theorem.~I.2]{lions1}, we obtain
\[
u- u^{n,i} = P_{V_{i}^{\bot}} (u- u^{n,i-1}),
\]
where $P_{V_{i}^{\bot}}$ is the orthogonal projection onto $V_{i}^{\bot}$ with respect to $a(\cdot, \cdot)$. Therefore
\begin{eqnarray}\label{thm_convergence_2}
u - u^{n} & = & u - u^{n,m} = P_{V_{m}^{\bot}} (u- u^{n,m-1}) = \cdots = P_{V_{m}^{\bot}} \cdots P_{V_{1}^{\bot}} (u- u^{n,0}) \nonumber \\
& = & P_{V_{m}^{\bot}} \cdots P_{V_{1}^{\bot}} (u- u^{n-1}) = (P_{V_{m}^{\bot}} \cdots P_{V_{1}^{\bot}})^{n} (u- u^{0}).
\end{eqnarray}
By Lemma \ref{lem_projection}, we have
\[
\| P_{V_{m}^{\bot}} \cdots P_{V_{1}^{\bot}} \| < 1
\]
with respect to $a(\cdot, \cdot)$. Define $L = \| P_{V_{m}^{\bot}} \cdots P_{V_{1}^{\bot}} \|$, then $L \in [0,1)$ and \eqref{thm_convergence_2} implies \eqref{thm_convergence_1} which finishes the proof.
\end{proof}

\section{Numerical Scheme}
\label{sec_implement}
In this section, we propose a numerical DDM iterative procedure (Algorithm \ref{alg_discrete} below) to obtain approximations to the solution of \eqref{eqn_problem_weak}. The procedure is as follow. First, $M$ is decomposed into overlapping subdomains $M_{i}$ ($1 \leq i \leq m$), and each $M_{i}$ is in a coordinate chart. Second, a DDM iterative procedure, which serves as a discrete counterpart of Algorithm \ref{alg_continuous} is applied. Due to $M_{i}$ being a coordinate chart, an elliptic problem on $M_{i}$ can be naturally converted to one on a domain in a Euclidean space. Then, this problem on $M_{i}$ can be solved approximately using conventional finite element methods. The transition of information among subdomains is by interpolation.

For the purpose of presentation, only manifolds without boundary are considered in the numerical examples.
A forthcoming work will study in detail the numerical implementation on manifolds with boundaries. In that more general case, we shall have to apply some special technique to deal with the boundary.

\subsection{Finite Element Spaces over a \texorpdfstring{d}{$d$}-Rectangle}
Suppose a manifold $M$ has dimension $d$.
As indicated above, each subdomain $M_{i}$ of $M$ shall be converted
to a domain $D_{i} \subset \mathbb{R}^{d}$. This conversion substantially reduces the difficulty of the numerical scheme in consideration. However, when $d>3$, the construction of a discretized problem on $D_{i}$ still remains a difficult task. The main reason is that the geometric intuition used in implementing finite element spaces in $\mathbb{R}^2$ or $\mathbb{R}^3$ cannot be simply ported to higher dimensions.
To minimize the difficulty of tessellation in higher dimensions, we shall choose $D_{i}$ as a $d$-rectangle and use the tensor product-type finite element space of $d$-rectangles (see e.g., \cite[p.~56-64]{ciarlet}).

Recall that a $d$-rectangle $D$ is
\[
D = \prod_{i=1}^{d} [a_{i}, b_{i}] = \{ (x_{1}, \cdots, x_{d}) \mid \forall i, x_{i} \in [a_{i}, b_{i}] \}.
\]
We refine each coordinate factor interval $[a_{i}, b_{i}]$ by adding points of partition:
\[
a_{i} = c_{i,0} < c_{i,1} < \cdots < c_{i,N_{i}} = b_{i}.
\]
Then $[a_{i}, b_{i}]$ is divided into $N_{i}$ subintervals. Define a function $\varphi_{i,j}$ on $[a_{i}, b_{i}]$ for $0 \leq j \leq N_{i}$ as
\begin{equation}\label{eqn_1d_base}
\varphi_{i,j} (x_{i}) =
\begin{cases}
\frac{x_{i} - c_{i,j-1}}{c_{i,j} - c_{i,j-1}}, & x_{i} \in [c_{i,j-1}, c_{i,j}]; \\
\frac{x_{i} - c_{i,j+1}}{c_{i,j} - c_{i,j+1}}, & x_{i} \in [c_{i,j}, c_{i,j+1}]; \\
0, & \text{otherwise}.
\end{cases}
\end{equation}
Here $\varphi_{i,j} (x_{i})$ is undefined for $x_{i} < c_{j}$ (resp. $x_{i} > c_{j}$) when $j=0$ (resp. $j= N_{i}$). Clearly, $\varphi_{i,j}$ is piecewise linear such that $\varphi_{i,j} (c_{i,j}) =1$ and $\varphi_{i,j} (c_{i,t}) =0$ for $t \neq j$.

The refinement of all such $[a_{i}, b_{i}]$ provides a grid on $D$. This divides $D$ into $\prod_{i=1}^{d} N_{i}$ many small $d$-rectangles
\begin{equation}\label{eqn_element}
\prod_{i=1}^{d} [c_{i,t_{i}-1}, c_{i,t_{i}}],
\end{equation}
where $1 \leq t_{i} \leq N_{i}$ for all $i$. Each small $d$-rectangle \eqref{eqn_element} is an element of the grid. A vertex of \eqref{eqn_element} is a node of the grid which is of the form
\[
\xi = (c_{1, j_{1}}, c_{2, j_{2}}, \cdots, c_{d, j_{d}}),
\]
where $0 \leq j_{i} \leq N_{i}$ for all $i$. Let $W_{h}$ be the finite element space of $d$-rectangles of type ($1$) (see \cite[p.~57]{ciarlet}). A base function in $W_{h}$ associated with the node $\xi$ is
\[
\varphi_{\xi} (x_{1}, \cdots, x_{d}) = \prod_{i=1}^{d} \varphi_{i,j_{i}} (x_{i}),
\]
where $\varphi_{i,j_{i}}$ is the one in \eqref{eqn_1d_base}.%

Since $D$ is a $d$-rectangle, it is relatively easy to handle the finite element space of $d$-rectangles. This advantage had been indicated in \cite[p.~62]{ciarlet} %

\subsection{Discrete Iterative Procedure}
Let $M$ be a $d$-dimensional compact Riemannian manifold without boundary. We try to find numerical approximations to the solution to the problem \eqref{eqn_problem_weak} on $M$.

Suppose $M = \bigcup_{i=1}^{m} \mathrm{Int} M_{i}$, and there is a smooth diffeomorphism $\phi_{i}: D_{i} \rightarrow M_{i}$ for each $i$, where $D_{i}$ is a $d$-rectangle in $\mathbb{R}^{d}$. Theoretically, we can always get such triples $(M_{i}, D_{i}, \phi_{i})$. Actually, for each $\zeta \in M$, there is an open chart neighborhood $U_{\zeta}$ of $\zeta$, i.e. there is a diffeomorphism $\phi_{\zeta}: \Omega_{\zeta} \rightarrow U_{\zeta}$, where $\Omega_{\zeta}$ is an open subset of $\mathbb{R}^{d}$. Since $\phi_{\zeta}^{-1} (\zeta)$ is an interior point of $\Omega_{\zeta}$, we can choose a rectangular neighborhood $D_{\zeta}$ of $\phi_{\zeta}^{-1} (\zeta)$ such that $D_{\zeta} \subseteq \Omega_{\zeta}$. This yields a diffeomorphism $\phi_{\zeta}: D_{\zeta} \rightarrow M_{\zeta} \subset U_{\zeta}$, where $M_{\zeta}$ is a neighborhood of $\zeta$. The interiors of all such $M_{\zeta}$ provide an open covering of $M$. Since it is compact, $M$ can be covered by the interiors of finitely many such $M_{\zeta}$. These finitely many $(M_\zeta, D_\zeta, \phi_{\zeta})$ yield a desired decomposition of $M$.

Let $(x_{1}, \dots, x_{d})$ denote the coordinates on $D_{i}$. Then the Riemannian metric $g$ on $M_{i}$ can be expressed as \eqref{eqn_metric}. Define an energy bilinear form on $H^{1} (D_{i})$ as
\begin{equation}\label{eqn_energy}
a_{i} (w,v) = \int_{D_{i}} \left( \sum_{\alpha,\beta=1}^{d} g^{\alpha \beta} \frac{\partial w}{\partial x_{\alpha}} \frac{\partial v}{\partial x_{\beta}} + bwv \right) \sqrt{G} \mathrm{d} x_{1} \cdots \mathrm{d} x_{d}.
\end{equation}
Define a bilinear form $(\cdot, \cdot)_{i}$ on $L^{2} (D_{i})$ as
\[
(w,v)_{i} = \int_{D_{i}} wv \sqrt{G} \mathrm{d} x_{1} \cdots \mathrm{d} x_{d}.
\]
By \eqref{eqn_gradient_product} and \eqref{eqn_volume}, the first line of \eqref{alg_continuous_1} is converted to the following equation: $\forall v \in H^{1}_{0} (D_{i})$,
\[
a_{i} (u^{n,i} \circ \phi_{i}, v) = (f \circ \phi_{i}, v)_{i}.
\]
Create a grid of $d$-rectangles over $D_{i}$. Let $V_{h}^{i}$ be the finite element space of $d$-rectangles of type $(1)$ over $D_{i}$. A discrete imitation of the first line of \eqref{alg_continuous_1} would be: find a $u_{h}^{n,i} \in V_{h}^{i}$ such that, $\forall v_{h} \in V_{h}^{i} \cap H^{1}_{0} (D_{i})$,
\[
a_{i} (u_{h}^{n,i}, v_{h}) = (f \circ \phi_{i}, v_{h})_{i}.
\]
However, this discrete problem is not well-posed because the degrees of freedom of $u_{h}^{n,i}$ on $\partial D_{i}$ are undetermined. As an imitation of the second line of \eqref{alg_continuous_1}, we should evaluate these degrees of freedom by the data in $d$-rectangles $D_{j}$ for $j \neq i$. So we have to investigate the transitions of coordinates.

For $i \neq j$, let $D_{ij} = \phi_{i}^{-1} (M_{i} \cap M_{j}) \subseteq D_{i}$ and $D_{ji} = \phi_{j}^{-1} (M_{i} \cap M_{j}) \subseteq D_{j}$ (see Fig. \ref{fig:phiij} for an illustration). Then
\[
\phi_{j}^{-1} \circ \phi_{i}: \ D_{ij} \rightarrow D_{ji}
\]
is a diffeomorphism which is the transition of coordinates on the overlapping between $M_{i}$ and $M_{j}$. As pointed out in Section \ref{sec_introduction}, $\phi_{j}^{-1} \circ \phi_{i}$ preserves neither nodes nor grid necessarily.
In other words, $\phi_{j}^{-1} \circ \phi_{i}$ may neither map a node in $D_{ij}$ to a node in $D_{ji}$, nor map the grid over $D_{ij}$ to the one over $D_{ji}$. Since $\phi_{i}$ and $\phi_{j}$ can be quite arbitrary, $\phi_{j}^{-1} \circ \phi_{i}$ may map the grid over $D_{ij}$ to intractable curves in $D_{ji}$.

\begin{figure}[htbp]
\centering
  \includegraphics[width=0.7\textwidth]{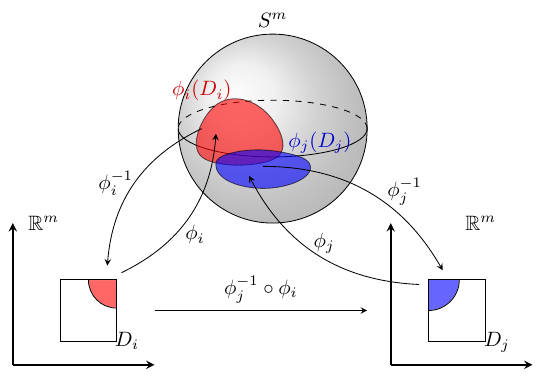}
  \caption{An illustration of a transition of coordinates on $S^m$.}
  \label{fig:phiij}
\end{figure}

However, this incompatibility among the grids over different $D_{i}$ is not a bad sign of our method. We wish to emphasize that this actually shows the high flexibility of our approach. In fact, if all the $\phi_{j}^{-1} \circ \phi_{i}$ preserved the grids, one would obtain a global grid on $M$. As mentioned in Section \ref{sec_introduction}, it is too difficult to obtain such a grid in practice. In \cite[Section~4]{qin_zhang_zhang}, the transitions of coordinates do not preserve grid but do preserve boundary nodes, i.e. the transitions map nodes on $D_{ij} \cap \partial D_{i}$ to nodes of $D_{ji}$. As a result, the method in \cite{qin_zhang_zhang} have some flexibility. The problem \eqref{eqn_problem_weak} on $S^{3}$ was solved numerically by three ways in \cite{qin_zhang_zhang}. However, those methods are not flexible enough to solve problems on more complicated manifolds in practice.
The method in this paper improves the work in \cite{qin_zhang_zhang}. Since our transitions of coordinates do not necessarily preserve nodes, we shall evaluate the degrees of freedom on $\partial D_{i}$ by interpolation.

Now we are in a position to propose our discrete Algorithm \ref{alg_discrete}. Define
\[
V_{h} = \bigoplus_{i=1}^{m} V_{h}^{i}.
\]

\medskip

\begin{algorithm}[ht]
\caption{A numerical DDM with a known chart decomposition.}
\label{alg_discrete}
\begin{algorithmic}[1]
\State Choose an arbitrary initial guess $u_{h}^{0} = (u_{h}^{0,1}, \cdots, u_{h}^{0,m}) \in V_{h}$.

\State For each $n>0$ and $1 \leq i \leq m$, assuming $u_{h}^{n-1}$ and $u_{h}^{n,j}$ have been obtained for all $j<i$, find a $u_{h}^{n,i} \in V_{h}^{i}$ as follows. Suppose $\xi \in \partial D_{i}$ is a node. If $\{ j \mid j<i, \phi_{i} (\xi) \in M_{j} \} \neq \emptyset$, then let $j_{0}$ be the maximum of this set and define
\[
u_{h}^{n,i} (\xi) = u_{h}^{n,j_{0}} (\phi_{j_{0}}^{-1} \circ \phi_{i} (\xi)).
\]
Otherwise, let $j_{0} = \max \{ j \mid j \neq i, \phi_{i} (\xi) \in M_{j} \}$ and define
\[
u_{h}^{n,i} (\xi) = u_{h}^{n-1,j_{0}} (\phi_{j_{0}}^{-1} \circ \phi_{i} (\xi)).
\]
The interior degrees of freedom of $u_{h}^{n,i}$ are determined by $\forall v_{h} \in V_{h}^{i} \cap H^{1}_{0} (D_{i})$,
\[
a_{i} (u_{h}^{n,i}, v_{h}) = (f \circ \phi_{i}, v_{h})_{i}.
\]

\State Define $u_{h}^{n} = (u_{h}^{n,1}, \cdots, u_{h}^{n,m}) \in V_{h}$.

\end{algorithmic}
\end{algorithm}

\medskip
Note that, in the second step of Algorithm \ref{alg_discrete}, it is possible that
\[
\{ j \mid j<i, \phi_{i} (\xi) \in M_{j} \} = \emptyset,
\]
for instance, $i=1$. However, the following always holds
\[
\{ j \mid j \neq i, \phi_{i} (\xi) \in M_{j} \} \neq \emptyset
\]
because $M= \bigcup_{j=1}^{m} \mathrm{Int} M_{j}$ and $\phi_{i} (\xi) \notin \mathrm{Int} M_{i}$. The choice of $u_{h}^{n,j_{0}}$ or $u_{h}^{n-1,j_{0}}$ follows the principle that we use the latest iterate in other subdomains to evaluate the boundary value of $u_{h}^{n,i}$. Also note that $\phi_{j_{0}}^{-1} \circ \phi_{i} (\xi)$ is not necessarily a node. However, we can calculate $u_{h}^{n,j_{0}} (\phi_{j_{0}}^{-1} \circ \phi_{i} (\xi))$ or $u_{h}^{n-1,j_{0}} (\phi_{j_{0}}^{-1} \circ \phi_{i} (\xi))$ by virtue of the coordinates of $\phi_{j_{0}}^{-1} \circ \phi_{i} (\xi)$ in $D_{j_{0}}$. This is essentially by an interpolation of the degrees of freedom of $u_{h}^{n,j_{0}}$ or $u_{h}^{n-1,j_{0}}$.

Now the $u_{h}^{n}$ in Algorithm \ref{alg_discrete} is the $n$th iterated discrete approximation to the solution to \eqref{eqn_problem_weak}. Unlike the $u^{n,i}$ in Algorithm \ref{alg_continuous} which is globally defined on $M$, the $u_{h}^{n,i}$ is a component of $u_{h}^{n}$ and is only defined on $D_{i}$. Furthermore, $u_{h}^{n,i} \circ \phi_{i}^{-1}$ and $u_{h}^{n,j} \circ \phi_{j}^{-1}$ usually disagree on the overlapping $M_{i} \cap M_{j}$. However, this disagreement is of no importance at all from the viewpoint of approximation. As far as $u_{h}^{n,i}$ approximates $u \circ \phi_{i}$ well on $D_{i}$ for each $i$, we know $u_{h}^{n,i} \circ \phi_{i}^{-1}$ approximates $u$ well on $M_{i}$. Since $M$ is covered by these $M_{i}$, good numerical data would be obtained everywhere on $M$.

\begin{remark}
Algorithm \ref{alg_discrete} implicitly defines a discretization. Actually, we ``discretize" the iteration in Algorithm \ref{alg_continuous} rather than the global problem \eqref{eqn_problem_weak}. On the other hand, this makes a rigorous theoretical analysis more difficult.
\end{remark}

\section{Product Manifolds}\label{sec_product}
Suppose $M$ and $M'$ are compact manifolds with dimensions $d$ and $d'$ respectively. The Cartesian product $M \times M'$ is a compact manifold with dimension $d+d'$. A decomposition of $M$ and another one of $M'$ canonically result in a decomposition of $M \times M'$.
Actually, suppose $M = \bigcup_{i=1}^{m} \mathrm{Int} M_{i}$ and $M' = \bigcup_{i'=1}^{m'} \mathrm{Int} M'_{i'}$, where $M_{i}$ (resp. $M'_{i'}$) are subdomains of $M$ (resp. $M'$). Then
\[
M \times M' = \bigcup_{i=1}^{m} \bigcup_{i'=1}^{m'} \mathrm{Int} (M_{i} \times M'_{i'}),
\]
where $M \times M'$ is decomposed into $mm'$ subdomains $M_{i} \times M'_{i'}$ for $1 \leq i \leq m$ and $1 \leq i' \leq m'$.

This canonical decomposition of $M \times M'$ reflects another advantage of the spaces of rectangular finite elements. More precisely, suppose there are diffeomorphisms $\phi_{i}: D_{i} \rightarrow M_{i}$ and $\phi'_{i'}: D'_{i'} \rightarrow M'_{i'}$, where each $D_{i} \subset \mathbb{R}^{d}$ (resp. $D'_{i'} \subset \mathbb{R}^{d'}$) is a $d$-rectangle (resp. $d'$-rectangle). Then we have the diffeomorphisms
\[
\phi_{i} \times \phi'_{i'}: \ D_{i} \times D'_{i'} \rightarrow M_{i} \times M'_{i'},
\]
where each $D_{i} \times D'_{i'} \subset \mathbb{R}^{d+d'}$ is a $(d+d')$-rectangle. The transition of coordinates between $D_{i} \times D'_{i'}$ and $D_{j} \times D'_{j'}$ is
\[
(\phi_{j} \times \phi'_{j'})^{-1} \circ (\phi_{i} \times \phi'_{i'}) = (\phi_{j}^{-1} \circ \phi_{i}) \times ({\phi'}_{j'}^{-1} \circ \phi'_{i'}).
\]
If rectangular grids are created over both $D_{i}$ and $D'_{i'}$, a rectangular grid over $D_{i} \times D'_{i'}$ follows automatically.

In summary, the procedures of the decomposition and discretization of factor manifolds are helpful for those of product manifolds.

\section{Numerical Experiments}\label{sec_experiment}
We perform several numerical tests of the proposed method on manifolds $S^{4}$, $\mathbb{CP}^{2}$ and $S^{2} \times S^{2}$. They are $4$-dimensional compact manifolds without boundary. While the proposed method applies to problems in all dimensions, the sizes of the linear systems derived from subdomains will increase exponentially with respect to the dimension. On one hand, we would have trouble in the storage of data. On the other hand, we would struggle to find solutions to these linear systems with desired accuracy. This difficulty is so called ``the curse of dimensionality''. It is actually a typical phenomenon of Euclidean spaces rather than of general manifolds. For the sake of presentation and due to the constraint of computing resources, the numerical examples in this paper consider manifolds with dimension no more than $4$. The numerical challenge in higher dimensions will be tackled in a forthcoming future work.

\subsection{Two Problems on \texorpdfstring{$S^{4}$}{S4}}
Let $M= S^{4}$ be the unit sphere in $\mathbb{R}^{5}$, i.e.
\[
M = S^{4} = \left\{ (y_{1}, y_{2}, y_{3}, y_{4}, y_{5}) \in \mathbb{R}^{5} \middle| \sum_{i=1}^{5} y_{i}^{2}  = 1  \right\}.
\]
We decompose $S^{4}$ into two subdomains as follows. By stereographic projections from the south pole $(0,0,0,0,-1)$ and north pole $(0,0,0,0,1)$, we obtain two subdomains $M_{1}$ and $M_{2}$ with coordinates whose interiors cover $S^{4}$. For an illustration please refer to Fig. \ref{fig:stereographic}, where the vertical direction stands for the direction of the 5th coordinate axis, the rectangle $[-r, r]^{4}$ is a domain in $\mathbb{R}^{4} \cong \mathbb{R}^{4} \times \{ 0 \} \subset \mathbb{R}^{5}$, and the intersection of $\mathbb{R}^{4} \times \{ 0 \}$ and $S^{4}$ is the equator of $S^{4}$. For each point $P= (x_{1}, \dots, x_{4}) \in [-r, r]^{4}$, the line segment between $P$ and the north pole $(0,0,0,0,1)$ intersects $S^{4}$ at a single point $Q = (y_{1}, \dots, y_{5})$ other than $(0,0,0,0,1)$. The map $P \mapsto Q$ provides an embedding $\phi_{1}: [-r, r]^{4} \rightarrow S^{4}$. We obtain another embedding $\phi_{2}$ if the north pole is replaced with the south pole. More precisely, we have
\begin{equation}
\label{eq:s4-r}
  D_{1} = D_{2} = [-r, r]^{4} \subset \mathbb{R}^{4}.
\end{equation}
\begin{figure}[htbp]
  \begin{center}
  \includegraphics[width=0.6\textwidth]{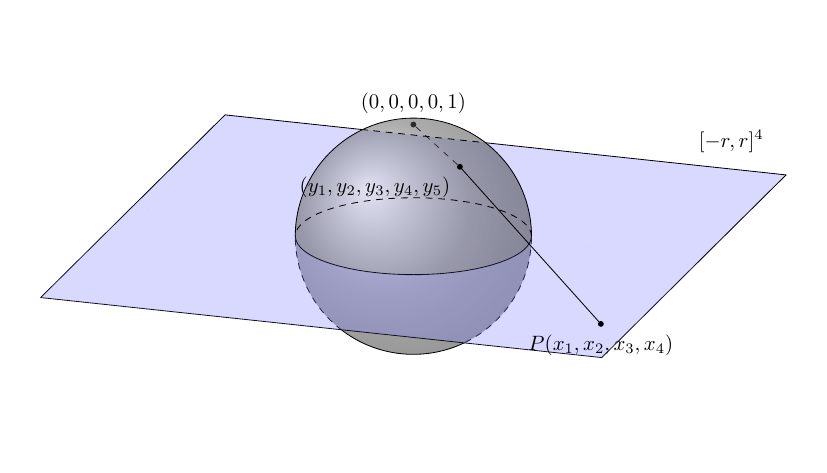}
    \includegraphics[width=0.6\textwidth]{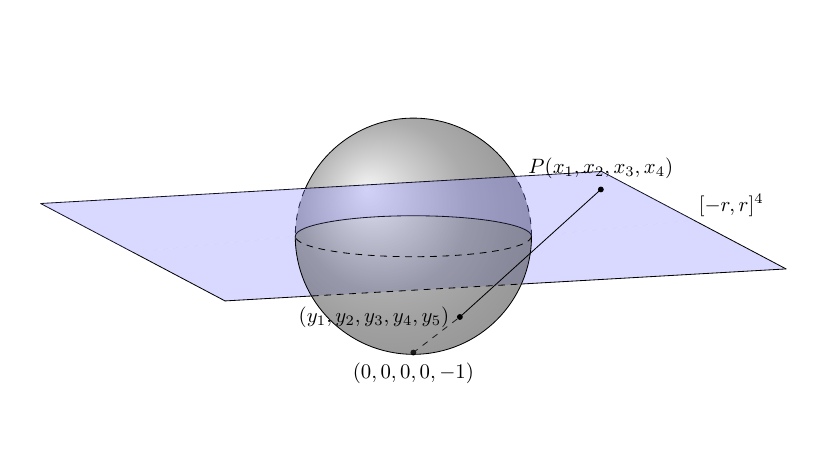}
  \end{center}
\caption{An illustration of the stereographic projections used in the case of $S^4$.}
\label{fig:stereographic}
\end{figure}
Let $x=(x_{1}, x_{2}, x_{3}, x_{4})$ denote the coordinates of $\mathbb{R}^{4}$. Let $\| x \| = \sqrt{\sum_{i=1}^{4} x_{i}^{2}}$. The stereographic projections provide diffeomorphisms $\phi_{i}: D_{i} \rightarrow M_{i}$ as
\begin{eqnarray}
\phi_{1}: \ D_{1} & \rightarrow & M_{1} \subset S^{4} \subset \mathbb{R}^{5} \nonumber
\\
\label{eq:stereographic}
x & \mapsto & \left( \frac{2x}{1 + \|x\|^{2}}, \frac{1 - \|x\|^{2}}{1 + \|x\|^{2}} \right)
\end{eqnarray}
and
\begin{eqnarray*}
\phi_{2}: \ D_{2} & \rightarrow & M_{2} \subset S^{4} \subset \mathbb{R}^{5} \\
x & \mapsto & \left( \frac{2x}{1 + \|x\|^{2}}, \frac{-1 + \|x\|^{2}}{1 + \|x\|^{2}} \right).
\end{eqnarray*}
To guarantee $S^{4} = \bigcup_{i=1}^{2} \mathrm{Int} M_{i}$, we have to let $r>1$. The larger $r$ is, the more overlapping there will be. The transitions of coordinates are given by
\[
\phi_{2}^{-1} \circ \phi_{1} (x) = \phi_{1}^{-1} \circ \phi_{2} (x) = \frac{x}{\| x \|^{2}}.
\]

Equip $S^{4}$ with the Riemannian metric $g$ inherited from the standard one on $\mathbb{R}^{5}$. On each $D_{i}$,
\[
g = 4 (1+ \| x \|^{2})^{-2} \sum_{\alpha = 1}^{4} \mathrm{d} x_{\alpha} \otimes \mathrm{d} x_{\alpha},
\]
and
\begin{eqnarray*}
\Delta v & = & 4^{-1} (1+ \| x \|^{2})^{4} \sum_{\alpha=1}^{4} \frac{\partial}{\partial x_{\alpha}} \left( (1+ \| x \|^{2})^{-2} \frac{\partial v}{\partial x_{\alpha}} \right) \\
& = & 4^{-1} (1+ \| x \|^{2})^{2} \sum_{\alpha=1}^{4} \frac{\partial^{2} v}{\partial x_{\alpha}^{2}} - (1+ \| x \|^{2}) \sum_{\alpha=1}^{4} x_{\alpha} \frac{\partial v}{\partial x_{\alpha}}.
\end{eqnarray*}

Consider the model problem \eqref{eqn_problem} on $S^{4}$ with $b>0$. Choose the true solution to \eqref{eqn_problem} as
\[
u= y_{5},
\]
where $y_{5}$ is the $5$th coordinate of $\mathbb{R}^{5}$. Then
\[
f=(4+b) u
\]
in \eqref{eqn_problem}. On $D_{i}$, $u$ has the expression
\[
u \circ \phi_{1} (x) = \frac{1 - \|x\|^{2}}{1 + \|x\|^{2}}, \qquad u \circ \phi_{2} (x) = \frac{-1 + \|x\|^{2}}{1 + \|x\|^{2}}.
\]
The weak form of \eqref{eqn_problem} on $D_{i}$ is formulated as: $\forall v \in H^{1}_{0} (D_{i})$,
\begin{eqnarray*}
&  & \int_{D_{i}} 4 (1+ \| x \|^{2})^{-2} \sum_{\alpha =1}^{4} \frac{\partial u \circ \phi_{i}}{\partial x_{\alpha}} \frac{\partial v}{\partial x_{\alpha}} \mathrm{d} x_{1} \mathrm{d} x_{2} \mathrm{d} x_{3} \mathrm{d} x_{4} \\
&  &  + \int_{D_{i}} 16 (1+ \| x \|^{2})^{-4} b \cdot u \circ \phi_{i} \cdot v \mathrm{d} x_{1} \mathrm{d} x_{2} \mathrm{d} x_{3} \mathrm{d} x_{4} \\
& = & \int_{D_{i}} 16 (1+ \| x \|^{2})^{-4} \cdot f \circ \phi_{i} \cdot v \mathrm{d} x_{1} \mathrm{d} x_{2} \mathrm{d} x_{3} \mathrm{d} x_{4}.
\end{eqnarray*}

Now we choose $b$ in \eqref{eqn_problem} as $1$. For the discretization, we divide each coordinate interval $[-r,r]$ into $N$ equal parts. The scale of the grid is thus $h=2r/N$. There are $(N+1)^4$ nodes on $D_{i}$, most rows of the stiffness matrix have $3^{4} =81$ nonzero entries. We keep $N \leq 80$ due to the memory limitation of the hardware.

To get the $n$-th discrete approximation $u_{h}^{n} = (u_{h}^{n,1}, u_{h}^{n,2})$, we need to solve a linear system $A_{i} X^{n,i} =b_{n,i}$ for $i=1,2$, where $X^{n,i}$ provides the interior degrees of freedom of $u_{h}^{n,i}$. We use the Conjugate Gradient Method (CG) to find $X^{n,i}$. As a result, the process to generate the sequence $\{ u_{h}^{n} \}$ is a nested iteration. The outer iteration is the DDM procedure Algorithm \ref{alg_discrete}. The initial guess is chosen as $u_{h}^{0} =0$. For each $n$, the inner iteration is the CG iteration to solve $A_{i} X^{n,i} =b_{n,i}$ for $i=1,2$. Note that $A_{i}$ remains the same when $n$ changes, whereas $b_{n,i}$ varies because of the evaluation of $u_{h}^{n,i}|_{\partial D_{i}}$. If $\{ u_{h}^{n} \}$ does converge, $X^{n-1,i}$ will be close to $X^{n,i}$ when $n$ is large enough. Thus, we choose the initial guess of $X^{n,i}$ as $X^{n-1,i}$. The tolerance for CG is set as
\[
\| A_{i} X^{n,i} - b_{n,i} \|_{2} / \| b_{n,i} \|_{2} \leq 10^{-8}.
\]
Our numerical results show that $u_{h}^{n}$ becomes stable when $n=n_{0}$ for some $n_{0}$, i.e. $u_{h}^{n} = u_{h}^{n_{0}}$ up to machine precision for all $n \geq n_{0}$. Actually, if
\[
\| A_{i} X^{n,i} - b_{n+1,i} \|_{2} / \| b_{n+1,i} \|_{2} \leq 10^{-8}
\]
for all $i$, then the inner iteration terminates for $n+1$ and $X^{n+1,i} = X^{n,i}$. We found that inner iteration terminates for all $n > n_{0}$. In other words, practically, the sequence $\{ u_{h}^{n} \}$ reaches its limit
\[
u_{h}^{\infty} = u_{h}^{n_{0}}
\]
at step $n_{0}$.

In the following tables,
\[
I_{h} u = (I_{h} u^{1}, I_{h} u^{2}) \in V_{h},
\]
where $I_{h} u^{i} \in V_{h}^{i}$ is the interpolation of $u \circ \phi_{i}$ on $D_{i}$. We define the energy norm of the error as
\[
\| I_{h} u - u_{h}^{\infty} \|_{a} = \max \{ a_{i}(I_{h} u^{i} - u_{h}^{\infty,i}, I_{h} u^{i} - u_{h}^{\infty,i})^{\frac{1}{2}} \mid i=1,2 \}.
\]
The $L^{2}$-norm $\| I_{h} u - u_{h}^{\infty} \|_{L^{2}}$, $L^{\infty}$-norm $\| I_{h} u - u_{h}^{\infty} \|_{L^{\infty}}$ and $H^{1}$-seminorm $| I_{h} u - u_{h}^{\infty} |_{H^{1}}$ are defined in similar ways. The numerical results are as follows in Tables \ref{table:s4-r1.2} and \ref{table:s4-r2}, where, for each norm, the data on the left side of each cell are errors and orders of convergence are appended to the right.

\begin{table}[ht]
\begin{center}
\begin{tabular}{|c|cc|cc|cc|cc|c|}
\hline
$h$ & \multicolumn{2}{c|}{$\|I_{h} u -  u_{h}^{\infty} \|_{L^{\infty}}$} & \multicolumn{2}{c|}{$\|I_{h} u -  u_{h}^{\infty} \|_{L^{2}}$} & \multicolumn{2}{c|}{$| I_{h} u - u_{h}^{\infty} |_{H^{1}}$} & \multicolumn{2}{c|}{$\| I_{h} u - u_{h}^{\infty} \|_{a}$} & $n_{0}$ \\
\hline
$0.24$ & $0.0302$ &  & $0.0690$ & & $0.2348$ & & $0.1830$ & & $22$ \\
\hline
$0.12$ & $0.0095$ & $1.7$ & $0.0180$ & $1.9$ & $0.0717$ & $1.7$ & $0.0501$ & $1.9$ & $23$ \\
\hline
$0.06$ & $0.0032$ & $1.6$ & $0.0046$ & $2.0$ & $0.0239$ & $1.6$ & $0.0150$ & $1.7$ & $22$ \\
\hline
$0.03$ & $7.2393e-4$ & $2.2$ & $0.0011$ & $2.0$ & $0.0082$ & $1.5$ & $0.0048$ & $1.6$ & $22$ \\
\hline
\end{tabular}
\end{center}
\caption{Convergence result on $S^4$ for $(u,r)= (y_{5}, 1.2)$.}
\label{table:s4-r1.2}
\end{table}

\begin{table}[ht]
\begin{center}
\begin{tabular}{|c|cc|cc|cc|cc|c|}
\hline
 $h$ & \multicolumn{2}{c|}{$\|I_{h} u -  u_{h}^{\infty} \|_{L^{\infty}}$} & \multicolumn{2}{c|}{$\|I_{h} u -  u_{h}^{\infty} \|_{L^{2}}$} & \multicolumn{2}{c|}{$| I_{h} u - u_{h}^{\infty} |_{H^{1}}$} & \multicolumn{2}{c|}{$\| I_{h} u - u_{h}^{\infty} \|_{a}$} & $n_{0}$ \\
\hline
$0.4$ & $0.1459$ & & $1.2578$ & & $0.9782$ & & $0.5725$ & & $10$ \\
\hline
$0.2$ & $0.0458$ & $1.7$ & $0.2546$ & $2.3$ & $0.2927$ & $1.7$ & $0.1416$ & $2.0$ & $10$ \\
\hline
$0.1$ & $0.0110$ & $2.1$ & $0.0665$ & $1.9$ & $0.1199$ & $1.3$ & $0.0427$ & $1.7$ & $10$ \\
\hline
$0.05$ & $0.0031$ & $1.8$ & $0.0165$ & $2.0$ & $0.0432$ & $1.5$ & $0.0131$ & $1.7$ & $10$ \\
\hline
\end{tabular}
\end{center}
\caption{Convergence result on $S^4$  for $(u,r)= (y_{5}, 2)$.}
\label{table:s4-r2}
\end{table}

We see that the error $I_{h} u -  u_{h}^{\infty}$ decays in the optimal order when $h$ decreases. Furthermore, $n_{0}$ decreases when $r$ becomes larger, i.e. $u_{h}^{n}$ reaches its limit $u_{h}^{\infty}$ fast provides that the overlapping between subdomains is large.

\begin{remark}
As shown in Tables \ref{table:s4-r1.2}, \ref{table:s4-r2}, and other tables below, the convergences under $H^{1}$- and the energy norms are significantly better than the optimal first order in $h$. The reason is yet to be explored. On the other hand, since the true solution to our example is $C^{\infty}$, the finite element spaces are defined on highly symmetric grids (rectangular), the transition maps are smooth, thus any (or all) from these factors may contribute to superconvergence. But we cannot prove this hypothesis at this stage.
\end{remark}

Note that Tables \ref{table:s4-r1.2} and \ref{table:s4-r2} actually show $u_{h}^{\infty,i} \circ \phi_{i}^{-1}$ approximates $u|_{M_{i}}$ well. Hence we obtain good numerical approximation to $u$ everywhere on $M$ because $M = \bigcup_{i=1}^{m} M_{i}$. More precisely, there is a $1-1$ and onto correspondence between functions on $M_{i}$ and those on $D_{i}$, i.e. a function $v$ on $M_{i}$ correspondence to $v \circ \phi_{i}$ on $D_{i}$. Via this bijection, $L^{\infty} (M_{i})$ (resp. $L^{2} (M_{i})$ and $H^{1} (M_{i})$) is \textit{isometrically isomorphic} to $L^{\infty} (D_{i})$ (resp. $L^{2} (D_{i}; g)$ and $H^{1} (D_{i}; g)$). Here the notation ``$g$" stands for the metric tensor in \eqref{eqn_metric}, $L^{2} (D_{i}; g)$ and $H^{1} (D_{i}; g)$ are the $L^{2}$-space and $H^{1}$-space respectively on $D_{i}$ with norms
\[
\| w \|_{L^{2} (D_{i}; g)} = \left( \int_{D_{i}} |w|^{2} \sqrt{G} \mathrm{d} x_{1} \cdots \mathrm{d} x_{d} \right)^{\frac{1}{2}},
\]
\[
|w|_{H^{1} (D_{i}; g)} = \left( \int_{D_{i}} \sum_{\alpha,\beta=1}^{d} g^{\alpha \beta} \frac{\partial w}{\partial x_{\alpha}} \frac{\partial w}{\partial x_{\beta}} \sqrt{G} \mathrm{d} x_{1} \cdots \mathrm{d} x_{d} \right)^{\frac{1}{2}},
\]
and
\[
\| w \|_{H^{1} (D_{i}; g)} = \left( \| w \|_{L^{2} (D_{i}; g)}^{2} + |w|_{H^{1} (D_{i}; g)}^{2} \right)^{\frac{1}{2}}.
\]
The above ``isometrically isomorphic" means the bijection is a linear isomorphism preserving norms (see Definition 1.13 in \cite[p.~66]{conway}), i.e. $\| v \|_{L^{2} (M_{i})} = \| v \circ \phi_{i} \|_{L^{2} (D_{i}; g)}$ and so on. Meanwhile, by \eqref{eqn_energy}, we also have $a(v,v)_{M_{i}} = a(v \circ \phi_{i}, v \circ \phi_{i})_{D_{i}}$. Furthermore, $(g_{\alpha \beta})_{d \times d}$ is bounded and uniformly elliptic on $D_{i}$ because $(g_{\alpha \beta})_{d \times d}$ is $C^{\infty}$ and $D_{i}$ is compact. Thus the $\| \cdot \|_{L^{2} (D_{i}; g)}$ and $|\cdot|_{H^{1} (D_{i}; g)}$ are equivalent to the usual $\| \cdot \|_{L^{2} (D_{i})}$ and $|\cdot|_{H^{1} (D_{i})}$ respectively. Therefore, $u_{h}^{\infty,i} \circ \phi_{i}^{-1}$ approximates $I_{h} u^{i} \circ \phi_{i}^{-1}$ well in $\| \cdot \|_{L^{\infty} (M_{i})}$, $\| \cdot \|_{L^{2} (M_{i})}$, $|\cdot|_{H^{1} (M_{i})}$ and $a(\cdot, \cdot)_{M_{i}}$ as far as $u_{h}^{\infty,i}$ approximates $I_{h} u^{i}$ well in $\| \cdot \|_{L^{\infty} (D_{i})}$, $\| \cdot \|_{L^{2} (D_{i})}$, $|\cdot|_{H^{1} (D_{i})}$ and $a(\cdot, \cdot)_{D_{i}}$. Since $I_{h} u^{i} \circ \phi_{i}^{-1}$ is an interpolation of $u|_{M_{i}}$, we infer $u_{h}^{\infty,i} \circ \phi_{i}^{-1}$ is a good approximation of $u|_{M_{i}}$.

We also investigated the number of iterations required to achieve an approximation of the same order of accuracy as $I_{h} u -  u_{h}^{\infty}$. So we set a tolerance for the outer iteration as
\[
\|I_{h} u -  u_{h}^{n} \|_{L^{\infty}} \leq 2 \|I_{h} u -  u_{h}^{\infty} \|_{L^{\infty}}.
\]
The numerical results are as follows in Tables \ref{table:s4-r1.2-tl} and \ref{table:s4-r2-tl}. We see that $n$ are much less than $n_{0}$.
\begin{table}[ht]
\begin{center}
\begin{tabular}{|c|c|c|c|c|c|}
\hline
$h$ & $\|I_{h} u -  u_{h}^{n} \|_{L^{\infty}}$ & $\|I_{h} u -  u_{h}^{n} \|_{L^{2}}$ & $| I_{h} u - u_{h}^{n} |_{H^{1}}$ & $\| I_{h} u - u_{h}^{n} \|_{a}$ & $n$ \\
\hline
$0.24$ & $0.0569$ & $0.2066$ & $0.2604$ & $0.2193$ & $4$ \\
\hline
$0.12$ & $0.0142$ & $0.0436$ & $ 0.0756$ & $0.0554$ & $6$ \\
\hline
$0.06$ & $0.0052$ & $0.0158$ & $0.0253$ & $0.0179$ & $7$ \\
\hline
$0.03$ & $0.0011$ & $0.0033$ & $0.0084$ & $0.0051$ & $9$ \\
\hline
\end{tabular}
\end{center}
\caption{Convergence result on $S^4$ for $(u,r)= (y_{5}, 1.2)$.}
\label{table:s4-r1.2-tl}
\end{table}

\begin{table}[ht]
\begin{center}
\begin{tabular}{|c|c|c|c|c|c|}
\hline
$h$ & $\|I_{h} u -  u_{h}^{n} \|_{L^{\infty}}$ & $\|I_{h} u -  u_{h}^{n} \|_{L^{2}}$ & $| I_{h} u - u_{h}^{n} |_{H^{1}}$ & $\| I_{h} u - u_{h}^{n} \|_{a}$ & $n$ \\
\hline
$0.4$ & $0.2231$ & $2.3141$ & $1.0592$ & $0.7099$ & $2$ \\
\hline
$0.2$ & $0.0550$ & $0.3806$ & $0.2953$ & $ 0.1551$ & $3$ \\
\hline
$0.1$ & $0.0203$ & $0.1945$ & $0.1281$ & $0.0608$ & $3$ \\
\hline
$0.05$ & $0.0042$ & $0.0315$ & $0.0434$ & $0.0144$ & $4$ \\
\hline
\end{tabular}
\end{center}
\caption{Convergence result on $S^4$ for $(u,r)= (y_{5}, 2)$.}
\label{table:s4-r2-tl}
\end{table}

Now we consider a second problem on $S^{4}$ with true solution $u= y_{1} y_{5}$ in \eqref{eqn_problem}. Then $f= (10+b) u$. On $D_{i}$, $u$ has the expression
\[
u \circ \phi_{1} (x) = \frac{2x_{1} (1 - \|x\|^{2})}{(1 + \|x\|^{2})^{2}}, \qquad u \circ \phi_{2} (x) = \frac{2x_{1}(-1 + \|x\|^{2})}{(1 + \|x\|^{2})^{2}}.
\]
We choose the $b$ in \eqref{eqn_problem} as $1$. The numerical results are in tables \ref{table:s4-qd-r1.2}, \ref{table:s4-qd-r2}, \ref{table:s4-qd-r1.2-tl} and \ref{table:s4-qd-r2-tl}. The performance of our algorithm on this problem is similar to that of the first one.

\begin{table}[ht]
\begin{center}
\begin{tabular}{|c|cc|cc|cc|cc|c|}
\hline
$h$ & \multicolumn{2}{c|}{$\|I_{h} u -  u_{h}^{\infty} \|_{L^{\infty}}$} & \multicolumn{2}{c|}{$\|I_{h} u -  u_{h}^{\infty} \|_{L^{2}}$} & \multicolumn{2}{c|}{$| I_{h} u - u_{h}^{\infty} |_{H^{1}}$} & \multicolumn{2}{c|}{$\| I_{h} u - u_{h}^{\infty} \|_{a}$} & $n_{0}$ \\
\hline
$0.24$ & $0.0445$ &  & $0.0782$ & & $0.2142$ & & $0.1633$ & & $9$ \\
\hline
$0.12$ & $0.0121$ & $1.9$  & $0.0200$ & $2.0$ & $0.0666$ & $1.7$ & $0.0450$ & $1.9$ & $9$ \\
\hline
$0.06$ & $0.0031$ & $2.0$ & $0.0051$ & $2.0$ & $0.0223$ & $1.6$ & $0.0136$ & $1.7$ & $9$ \\
\hline
$0.03$ & $7.8553e-4$ & $2.0$ & $0.0013$ & $2.0$ & $0.0077$ & $1.5$ & $0.0043$ & $1.7$ & $9$ \\
\hline
\end{tabular}
\end{center}
\caption{Convergence result on $S^4$ for $(u,r)= (y_{1} y_{5}, 1.2)$.}
\label{table:s4-qd-r1.2}
\end{table}

\begin{table}[ht]
\begin{center}
\begin{tabular}{|c|cc|cc|cc|cc|c|}
\hline
$h$ & \multicolumn{2}{c|}{$\|I_{h} u -  u_{h}^{\infty} \|_{L^{\infty}}$} & \multicolumn{2}{c|}{$\|I_{h} u -  u_{h}^{\infty} \|_{L^{2}}$} & \multicolumn{2}{c|}{$| I_{h} u - u_{h}^{\infty} |_{H^{1}}$} & \multicolumn{2}{c|}{$\| I_{h} u - u_{h}^{\infty} \|_{a}$} & $n_{0}$ \\
\hline
$0.4$ & $0.1389$ &  & $1.0971$ & & $1.1316$ & & $0.5017$ & & $4$ \\
\hline
$0.2$ & $0.0478$ & $1.5$ & $0.2658$ & $2.0$ & $0.3540$ & $1.7$ & $0.1423$ & $1.8$ & $4$ \\
\hline
$0.1$ & $0.0135$ & $1.8$ & $0.0701$ & $1.9$ & $0.1141$ & $1.6$ & $0.0401$ & $1.8$ & $5$ \\
\hline
$0.05$ & $0.0034$ & $2.0$ & $0.0176$ & $2.0$ & $0.0375$ & $1.6$ & $0.0117$ & $1.8$ & $5$ \\
\hline
\end{tabular}
\end{center}
\caption{Convergence result on $S^4$ for $(u,r)= (y_{1} y_{5}, 2)$.}
\label{table:s4-qd-r2}
\end{table}

\begin{table}[ht]
\begin{center}
\begin{tabular}{|c|c|c|c|c|c|}
\hline
$h$ & $\|I_{h} u -  u_{h}^{n} \|_{L^{\infty}}$ & $\|I_{h} u -  u_{h}^{n} \|_{L^{2}}$ & $| I_{h} u - u_{h}^{n} |_{H^{1}}$ & $\| I_{h} u - u_{h}^{n} \|_{a}$ & $n$ \\
\hline
$0.24$ & $0.0551$ & $0.1201$ & $0.2379$ & $0.1933$ & $2$ \\
\hline
$0.12$ & $0.0128$ & $0.0235$ & $0.0676$ & $0.0468$ & $3$ \\
\hline
$0.06$ & $0.0040$ & $0.0087$ & $0.0240$ & $0.0161$ & $3$ \\
\hline
$0.03$ & $8.3731e-4$ & $0.0016$ & $0.0077$ & $0.0044$ & $4$ \\
\hline
\end{tabular}
\end{center}
\caption{Convergence result on $S^4$ for $(u,r)= (y_{1} y_{5}, 1.2)$.}
\label{table:s4-qd-r1.2-tl}
\end{table}

\begin{table}[ht]
\begin{center}
\begin{tabular}{|c|c|c|c|c|c|}
\hline
$h$ & $\|I_{h} u -  u_{h}^{n} \|_{L^{\infty}}$ & $\|I_{h} u -  u_{h}^{n} \|_{L^{2}}$ & $| I_{h} u - u_{h}^{n} |_{H^{1}}$ & $\| I_{h} u - u_{h}^{n} \|_{a}$ & $n$ \\
\hline
$0.4$ & $0.1393$ & $1.1006$ & $1.1349$ & $0.5028$ & $2$ \\
\hline
$0.2$ & $0.0484$ & $0.2701$ & $0.3580$ & $0.1438$ & $2$ \\
\hline
$0.1$ & $0.0142$ & $0.0747$ & $0.1174$ & $0.0416$ & $2$ \\
\hline
$0.05$ & $0.0043$ & $0.0222$ & $0.0403$ & $0.0131$ & $2$ \\
\hline
\end{tabular}
\end{center}
\caption{Convergence result on $S^4$ for $(u,r)= (y_{1} y_{5}, 2)$.}
\label{table:s4-qd-r2-tl}
\end{table}

\subsection{A Problem on \texorpdfstring{$\mathbb{CP}^{2}$}{CP2}}
Let $M = \mathbb{CP}^{2}$ be the complex projective plane. It is a compact complex manifold with complex dimension $2$. Certainly, it can be considered as a real manifold with dimension $4$.

Unlike $S^{4}$, the $\mathbb{CP}^{2}$ is not a submanifold of any Euclidean space by definition. Furthermore, $\mathbb{CP}^{2}$ cannot be embedded differential-topologically into $\mathbb{R}^{k}$ with $k<7$ by the theory of characteristic classes (\cite[Corollary~11.4]{milnor_stasheff}). Whitney constructed an explicit embedding of $\mathbb{CP}^{2}$ into $\mathbb{R}^{7}$ in an ingenious way (\cite[Appendix]{whitney44}). Since the codimension of $\mathbb{CP}^{2}$ in $\mathbb{R}^{7}$ is $3$, it is incredibly difficult to build effective polytopal approximations to $\mathbb{CP}^{2}$ in $\mathbb{R}^{7}$. On the other hand, by definition, $\mathbb{CP}^{2}$ can be constructed by patching together three coordinate charts, where the transitions of coordinates have explicit and neat formulas. Thus, it is very suitable to apply our method to $\mathbb{CP}^{2}$.

The $\mathbb{CP}^{2}$ can be easily defined as a quotient space. Let
\[
\mathbb{C}^{3} \setminus \{ \mathbf{0} \} = \{ (w_{0}, w_{1}, w_{2}) \mid \mathbf{0} \neq (w_{0}, w_{1}, w_{2}) \in \mathbb{C}^{3} \}.
\]
Here $\mathbf{0} \in \mathbb{C}^{3}$ is the origin, each $w_{j}$ is a complex number for $0 \leq j \leq 2$, and, following the convention of algebraic geometry, the index $j$ starts from $0$ rather than $1$. Define a relation of equivalence on $\mathbb{C}^{3} \setminus \{ \mathbf{0} \}$ as
\[
(w_{0}, w_{1}, w_{2}) \sim (w'_{0}, w'_{1}, w'_{2})
\]
if and only if
\[
(w_{0}, w_{1}, w_{2}) = \lambda (w'_{0}, w'_{1}, w'_{2})
\]
for some $0 \neq \lambda \in \mathbb{C}$. Define
\[
\mathbb{CP}^{2} = \mathbb{C}^{3} \setminus \{ \mathbf{0} \} / \sim.
\]
Thus, every $P \in \mathbb{CP}^{2}$ can be represented by a vector $(w_{0}, w_{1}, w_{2}) \in \mathbb{C}^{3} \setminus \{ \mathbf{0} \}$. Conventionally, we write
\[
P = [w_{0}, w_{1}, w_{2}],
\]
where $[w_{0}, w_{1}, w_{2}]$ are called the \textit{homogeneous coordinates} of $P$. Note that, for $\lambda \neq 0$,
\[
[w_{0}, w_{1}, w_{2}] = [\lambda w_{0}, \lambda w_{1}, \lambda w_{2}].
\]
For more details of general $\mathbb{CP}^{k}$, see \cite[p.~15]{griffiths_harris}.

Now we decompose $\mathbb{CP}^{2}$ into three subdomains $M_{j}$ for $0 \leq j \leq 2$ (note that the index $j$ is chosen to start from $0$ for the convenience of presentation.) In the following, $z_{j} = x_{j} + \sqrt{-1} y_{j} \in \mathbb{C}$, $x_{j} \in \mathbb{R}$, and $y_{j} \in \mathbb{R}$. We shall identify the complex number $z_{j}$ with the $2$-dimensional real vector $(x_{j}, y_{j})$. Let $D_{j} = [-r, r]^{4} \subset \mathbb{R}^{4} \simeq \mathbb{C}^{2}$. For $0 \leq j \leq 2$, we have the following diffeomorphisms
\begin{eqnarray*}
\phi_{0}: \ D_{0} & \rightarrow & M_{0} \subset  \mathbb{CP}^{2}  \\
(z_{1}, z_{2}) & \mapsto & [1, z_{1}, z_{2}],
\end{eqnarray*}
\begin{eqnarray*}
\phi_{1}: \ D_{1} & \rightarrow & M_{1} \subset  \mathbb{CP}^{2} \\
(z_{0}, z_{2}) & \mapsto & [z_{0}, 1, z_{2}],
\end{eqnarray*}
and
\begin{eqnarray*}
\phi_{2}: \ D_{2} & \rightarrow & M_{2} \subset  \mathbb{CP}^{2} \\
(z_{0}, z_{1}) & \mapsto & [z_{0}, z_{1}, 1].
\end{eqnarray*}
To guarantee $\mathbb{CP}^{2} = \bigcup_{j=0}^{2} \mathrm{Int} M_{j}$, we have to let $r>1$. The larger $r$ is, the more overlapping there will be. The transitions of coordinates are given by
\[
\phi_{1}^{-1} \circ \phi_{0} (z_{1}, z_{2}) = \left( \frac{1}{z_{1}}, \frac{z_{2}}{z_{1}} \right), \qquad \phi_{0}^{-1} \circ \phi_{1} (z_{0}, z_{2}) = \left( \frac{1}{z_{0}}, \frac{z_{2}}{z_{0}} \right),
\]
\[
\phi_{2}^{-1} \circ \phi_{0} (z_{1}, z_{2}) = \left( \frac{1}{z_{2}}, \frac{z_{1}}{z_{2}} \right), \qquad \phi_{0}^{-1} \circ \phi_{2} (z_{0}, z_{1}) = \left( \frac{z_{1}}{z_{0}}, \frac{1}{z_{0}} \right),
\]
\[
\phi_{2}^{-1} \circ \phi_{1} (z_{0}, z_{2}) = \left( \frac{z_{0}}{z_{2}}, \frac{1}{z_{2}} \right), \qquad \phi_{1}^{-1} \circ \phi_{2} (z_{0}, z_{1}) = \left( \frac{z_{0}}{z_{1}}, \frac{1}{z_{1}} \right).
\]

Equipping it with the classical \textit{Fubini-Study metric} (c.f. \cite[p.~30]{griffiths_harris}), $\mathbb{CP}^{2}$ becomes a K\"{a}hler manifold with K\"{a}hler form
\[
\frac{\sqrt{-1}}{2} \partial \bar{\partial} \log \sum_{j=0}^{2} |w_{j}|^{2},
\]
where $[w_{0}, w_{1}, w_{2}]$ are the homogeneous coordinates of $\mathbb{CP}^{2}$. The \textit{Fubini-Study metric}, denoted by $\mathcal{H}$, is a Hermitian metric. On each $D_{j}$, it is expressed as
\[
\mathcal{H} = (1+ \|z\|^{2})^{-1} \sum_{\alpha = 0, \alpha \neq j}^{2} \mathrm{d} z_{\alpha} \otimes \mathrm{d} \bar{z}_{\alpha}
- (1+ \|z\|^{2})^{-2} \sum_{\alpha = 0, \alpha \neq j}^{2} \sum_{\beta = 0, \beta \neq j}^{2} \bar{z}_{\alpha} z_{\beta} \mathrm{d} z_{\alpha} \otimes \mathrm{d} \bar{z}_{\beta},
\]
where
\begin{equation}\label{eqn_projective_norm}
\|z\|^{2} = \sum_{\alpha = 0, \alpha \neq j}^{2} |z_{\alpha}|^{2} = \sum_{\alpha = 0, \alpha \neq j}^{2} (x_{\alpha}^{2} + y_{\alpha}^{2}).
\end{equation}
We choose the Riemannian metric $g$ on $\mathbb{CP}^{2}$ as the real part of $\mathcal{H}$. This $g$ provides the underlying Riemannian structure of the above K\"{a}hler structure. The Laplacian is expressed as
\begin{eqnarray*}
&  & \Delta v = 2 \Delta_{\partial} v = 2 \Delta_{\bar{\partial}} v \\
& = & 4 (1+ \|z\|^{2})^{3} \sum_{\alpha = 0, \alpha \neq j}^{2} \frac{\partial}{\partial z_{\alpha}} \left( (1+ \|z\|^{2})^{-2} \frac{\partial v}{\partial \bar{z}_{\alpha}} + (1+ \|z\|^{2})^{-2} z_{\alpha} \sum_{\beta = 0, \beta \neq j}^{2} \bar{z}_{\beta} \frac{\partial v}{\partial \bar{z}_{\beta}} \right) \\
& = & 4 (1+ \|z\|^{2}) \left( \sum_{\alpha = 0, \alpha \neq j}^{2} \frac{\partial^{2} v}{\partial z_{\alpha} \partial \bar{z}_{\alpha}} + \sum_{\alpha = 0, \alpha \neq j}^{2} \sum_{\beta = 0, \beta \neq j}^{2} z_{\alpha} \bar{z}_{\beta} \frac{\partial^{2} v}{\partial z_{\alpha} \partial \bar{z}_{\beta}} \right).
\end{eqnarray*}

We consider the model problem \eqref{eqn_problem} with $b>0$. Choose constants $a_{j} \in \mathbb{R}$, $0 \leq j \leq 2$. Choose the true solution to \eqref{eqn_problem} as
\begin{equation}\label{eqn_u_cp2}
u([w_{0}, w_{1}, w_{2}]) = \sum_{j=0}^{2} a_{j} |w_{j}|^{2},
\end{equation}
where $[w_{0}, w_{1}, w_{2}]$ are homogeneous coordinates with normalization $\sum_{j=0}^{2} |w_{j}|^{2} =1$. It is easy to see that $u$ is well-defined. The $f$ in \eqref{eqn_problem} is then
\[
f = (12+b) u - 4 \sum_{j=0}^{2} a_{j}.
\]
On $D_{j}$, the true solution $u$ has the expression
\[
u \circ \phi_{j} = \frac{a_{j} + \sum_{\beta = 0, \beta \neq j}^{2} a_{\beta} |z_{\beta}|^{2}}{1 + \sum_{\beta = 0, \beta \neq j}^{2} |z_{\beta}|^{2}} = \frac{a_{j} + \sum_{\beta = 0, \beta \neq j}^{2} a_{\beta} (x_{\beta}^{2} + y_{\beta}^{2})}{1 + \sum_{\beta = 0, \beta \neq j}^{2} (x_{\beta}^{2} + y_{\beta}^{2})}.
\]
The weak form of \eqref{eqn_problem} on $D_{j}$ is formulated as: $\forall v \in H^{1}_{0} (D_{j})$,
\begin{eqnarray*}
&  & \int_{D_{j}} (1 + \|z\|^{2})^{-2} \sum_{\alpha = 0, \alpha \neq j}^{2} \left( \frac{\partial u \circ \phi_{j}}{\partial x_{\alpha}} \frac{\partial v}{\partial x_{\alpha}} + \frac{\partial u \circ \phi_{j}}{\partial y_{\alpha}} \frac{\partial v}{\partial y_{\alpha}} \right) \\
&  & + \int_{D_{j}} (1 + \|z\|^{2})^{-2} \left[ \sum_{\alpha = 0, \alpha \neq j}^{2} \left( x_{\alpha} \frac{\partial u \circ \phi_{j}}{\partial x_{\alpha}} + y_{\alpha} \frac{\partial u \circ \phi_{j}}{\partial y_{\alpha}} \right) \right] \cdot \left[ \sum_{\alpha = 0, \alpha \neq j}^{2} \left( x_{\alpha} \frac{\partial v}{\partial x_{\alpha}} + y_{\alpha} \frac{\partial v}{\partial y_{\alpha}} \right) \right] \\
&  & + \int_{D_{j}} (1 + \|z\|^{2})^{-2} \left[ \sum_{\alpha = 0, \alpha \neq j}^{2} \left( y_{\alpha} \frac{\partial u \circ \phi_{j}}{\partial x_{\alpha}} - x_{\alpha} \frac{\partial u \circ \phi_{j}}{\partial y_{\alpha}} \right) \right] \cdot \left[ \sum_{\alpha = 0, \alpha \neq j}^{2} \left( y_{\alpha} \frac{\partial v}{\partial x_{\alpha}} - x_{\alpha} \frac{\partial v}{\partial y_{\alpha}} \right) \right] \\
&  & + \int_{D_{j}} (1 + \|z\|^{2})^{-3} b \cdot u \circ \phi_{j} \cdot v \\
& = & \int_{D_{j}} (1 + \|z\|^{2})^{-3} f \circ \phi_{j} \cdot v,
\end{eqnarray*}
where $\| z \|^{2}$ is defined in \eqref{eqn_projective_norm}, and the symbols $\mathrm{d} x_{\alpha}$ and $\mathrm{d} y_{\alpha}$ in the integrals are also omitted.

Now we choose $b$ in \eqref{eqn_problem} as $4$, choose $(a_{0}, a_{1}, a_{2})$ in \eqref{eqn_u_cp2} as $(0,1,-1)$. The numerical results are as follows in Tables \ref{table:cp2-r1.2}, \ref{table:cp2-r2}, \ref{table:cp2-r1.2-tl} and \ref{table:cp2-r2-tl}. Tables \ref{table:cp2-r1.2} and \ref{table:cp2-r2} indicate that the sequence $\{ u_{h}^{n} \}$ practiaclly reaches its limit $u_{h}^{\infty}$ at step $n_{0}$. The convergence rate improves as the overlapping between subdomains increases. By referring to Tables \ref{table:cp2-r1.2-tl} and \ref{table:cp2-r2-tl}, it is possible to achieve an approximation with the same order of accuracy as $I_{h} u -  u_{h}^{\infty}$ with much fewer iterative steps.

\begin{table}[ht]
\begin{center}
\begin{tabular}{|c|cc|cc|cc|cc|c|} \hline
$h$ & \multicolumn{2}{c|}{$\|I_{h} u -  u_{h}^{\infty} \|_{L^{\infty}}$} & \multicolumn{2}{c|}{$\|I_{h} u -  u_{h}^{\infty} \|_{L^{2}}$} & \multicolumn{2}{c|}{$| I_{h} u - u_{h}^{\infty} |_{H^{1}}$} & \multicolumn{2}{c|}{$\| I_{h} u - u_{h}^{\infty} \|_{a}$} & $n_{0}$ \\
\hline
$0.24$ & $0.0376$ & & $0.0454$ & & $0.1559$ & & $0.0718$ & & $38$ \\
\hline
$0.12$ & $0.0103$ & $1.9$ & $0.0116$ & $2.0$ & $0.0441$ & $1.8$ & $0.0204$ & $1.8$ & $36$ \\
\hline
$0.06$ & $0.0024$ & $2.1$ & $0.0029$ & $2.0$ & $0.0127$ & $1.8$ & $0.0057$ & $1.8$ & $35$ \\
\hline
$0.03$ & $6.1832e-4$ & $2.0$ & $7.3810e-4$ & $2.0$ & $0.0039$ & $1.7$ & $0.0017$ & $1.7$ & $34$ \\
\hline
\end{tabular}
\end{center}
\caption{Convergence result on $\mathbb{CP}^{2}$ for $r=1.2$.}
\label{table:cp2-r1.2}
\end{table}

\begin{table}[ht]
  \begin{center}
\begin{tabular}{|c|cc|cc|cc|cc|c|} \hline
$h$ & \multicolumn{2}{c|}{$\|I_{h} u -  u_{h}^{\infty} \|_{L^{\infty}}$} & \multicolumn{2}{c|}{$\|I_{h} u -  u_{h}^{\infty} \|_{L^{2}}$} & \multicolumn{2}{c|}{$| I_{h} u - u_{h}^{\infty} |_{H^{1}}$} & \multicolumn{2}{c|}{$\| I_{h} u - u_{h}^{\infty} \|_{a}$} & $n_{0}$ \\
\hline
$0.4$ & $0.1026$ & & $0.3787$ & & $0.8338$ & & $0.2268$ & & $14$ \\
\hline
$0.2$ & $0.0312$ & $1.7$ & $0.1050$ & $1.9$ & $0.2483$ & $1.7$ & $0.0674$ & $1.8$ & $14$ \\
\hline
$0.1$ & $0.0094$ & $1.7$ & $0.0273$ & $1.9$ & $0.0771$ & $1.7$ & $0.0198$ & $1.8$ & $14$ \\
\hline
$0.05$ & $0.0020$ & $2.2$ & $0.0067$ & $2.0$ & $0.0245$ & $1.7$ & $0.0061$ & $1.7$ & $13$ \\
\hline
\end{tabular}
\end{center}
\caption{Convergence result on $\mathbb{CP}^{2}$ for $r=2$.}
\label{table:cp2-r2}
\end{table}

\begin{table}[ht]
\begin{center}
\begin{tabular}{|c|c|c|c|c|c|}
\hline
$h$ & $\|I_{h} u -  u_{h}^{n} \|_{L^{\infty}}$ & $\|I_{h} u -  u_{h}^{n} \|_{L^{2}}$ & $| I_{h} u - u_{h}^{n} |_{H^{1}}$ & $\| I_{h} u - u_{h}^{n} \|_{a}$ & $n$ \\
\hline
$0.24$ & $0.0691$ & $0.1832$ & $0.2147$ & $0.1332$ & $3$ \\
\hline
$0.12$ & $0.0175$ & $0.0480$ & $0.0462$ & $0.0321$ & $6$ \\
\hline
$0.06$ & $0.0043$ & $0.0127$ & $0.0132$ & $ 0.0087$ & $9$ \\
\hline
$0.03$ & $0.0012$ & $0.0034$ & $0.0040$ & $ 0.0025$ & $12$ \\
\hline
\end{tabular}
\end{center}
\caption{Convergence result on $\mathbb{CP}^{2}$ for $r=1.2$.}
\label{table:cp2-r1.2-tl}
\end{table}

\begin{table}[ht]
\begin{center}
\begin{tabular}{|c|c|c|c|c|c|}
\hline
$h$ & $\|I_{h} u -  u_{h}^{n} \|_{L^{\infty}}$ & $\|I_{h} u -  u_{h}^{n} \|_{L^{2}}$ & $| I_{h} u - u_{h}^{n} |_{H^{1}}$ & $\| I_{h} u - u_{h}^{n} \|_{a}$ & $n$ \\
\hline
$0.4$ & $0.1382$ & $0.9193$ & $0.8896$ & $0.2793$ & $2$ \\
\hline
$0.2$ & $0.0432$ & $0.2361$ & $0.2516$ & $0.0806$ & $3$ \\
\hline
$0.1$ & $0.0123$ & $0.0601$ & $0.0777$ & $0.0227$ & $4$ \\
\hline
$0.05$ & $0.0027$ & $0.0148$ & $0.0246$ & $0.0067$ & $5$ \\
\hline
\end{tabular}
\end{center}
\caption{Convergence result on $\mathbb{CP}^{2}$ for $r=2$.}
\label{table:cp2-r2-tl}
\end{table}

\subsection{A Problem on \texorpdfstring{$S^{2} \times S^{2}$}{S2xS2}}
Let $S^{2}$ be the unit sphere in $\mathbb{R}^{3}$, i.e.
\[
S^{2} = \left\{ (y_{1}, y_{2}, y_{3}) \in \mathbb{R}^{3} \middle| \sum_{i=1}^{3} y_{i}^{2}  = 1  \right\}.
\]
Let $M = S^{2} \times S^{2}$. Similar to $S^{4}$, we can decompose $S^{2}$ into two subdomains via stereographic projections. This decomposition results in a product decomposition of $S^{2} \times S^{2}$ with $2 \times 2 =4$ subdomains.

In the following, let $x= (x_{1}, x_{2})$ and $x' = (x'_{1}, x'_{2})$ denote the coordinates of $\mathbb{R}^{2}$. Let $\| x \| = \sqrt{\sum_{i=1}^{2} x_{i}^{2}}$ and $\| x' \| = \sqrt{\sum_{i=1}^{2} {x'}_{i}^{2}}$. For $1 \leq i \leq 4$, let
\[
D_{i} = [-r, r]^{4} = \{ (x, x') \mid x \in [-r, r]^{2}, x' \in [-r, r]^{2} \}.
\]
The product decomposition of $S^{2} \times S^{2}$ is given by diffeomorphisms
\begin{eqnarray*}
\phi_{1}: \ D_{1} & \rightarrow & M_{1} \subset S^{2} \times S^{2} \subset \mathbb{R}^{3} \times \mathbb{R}^{3} \\
(x, x') & \mapsto & \left( \frac{2x}{1 + \|x\|^{2}}, \frac{1 - \|x\|^{2}}{1 + \|x\|^{2}}, \frac{2x'}{1 + \|x'\|^{2}}, \frac{1 - \|x'\|^{2}}{1 + \|x'\|^{2}}  \right),
\end{eqnarray*}
\begin{eqnarray*}
\phi_{2}: \ D_{2} & \rightarrow & M_{2} \subset S^{2} \times S^{2} \subset \mathbb{R}^{3} \times \mathbb{R}^{3} \\
(x, x') & \mapsto & \left( \frac{2x}{1 + \|x\|^{2}}, \frac{1 - \|x\|^{2}}{1 + \|x\|^{2}}, \frac{2x'}{1 + \|x'\|^{2}}, \frac{-1 + \|x'\|^{2}}{1 + \|x'\|^{2}}  \right),
\end{eqnarray*}
\begin{eqnarray*}
\phi_{3}: \ D_{3} & \rightarrow & M_{3} \subset S^{2} \times S^{2} \subset \mathbb{R}^{3} \times \mathbb{R}^{3} \\
(x, x') & \mapsto & \left( \frac{2x}{1 + \|x\|^{2}}, \frac{-1 + \|x\|^{2}}{1 + \|x\|^{2}}, \frac{2x'}{1 + \|x'\|^{2}}, \frac{1 - \|x'\|^{2}}{1 + \|x'\|^{2}}  \right),
\end{eqnarray*}
and
\begin{eqnarray*}
\phi_{4}: \ D_{4} & \rightarrow & M_{4} \subset S^{2} \times S^{2} \subset \mathbb{R}^{3} \times \mathbb{R}^{3} \\
(x, x') & \mapsto & \left( \frac{2x}{1 + \|x\|^{2}}, \frac{-1 + \|x\|^{2}}{1 + \|x\|^{2}}, \frac{2x'}{1 + \|x'\|^{2}}, \frac{-1 + \|x'\|^{2}}{1 + \|x'\|^{2}}  \right).
\end{eqnarray*}
To guarantee $S^{2} \times S^{2} = \bigcup_{i=1}^{4} \mathrm{Int} M_{i}$, we have to let $r>1$. The larger $r$ is, the more overlapping there will be. The transitions of coordinates are given by
\[
\phi_{2}^{-1} \circ \phi_{1} (x,x') = \left( x, \frac{x'}{\| x' \|^{2}} \right), \qquad \phi_{3}^{-1} \circ \phi_{1} (x,x') = \left( \frac{x}{\| x \|^{2}}, x' \right),
\]
\[
\phi_{4}^{-1} \circ \phi_{1} (x,x') = \left( \frac{x}{\| x \|^{2}}, \frac{x'}{\| x' \|^{2}} \right), \qquad \phi_{3}^{-1} \circ \phi_{2} (x,x') = \left( \frac{x}{\| x \|^{2}}, \frac{x'}{\| x' \|^{2}} \right),
\]
\[
\phi_{4}^{-1} \circ \phi_{2} (x,x') = \left( \frac{x}{\| x \|^{2}}, x' \right), \qquad \phi_{4}^{-1} \circ \phi_{3} (x,x') = \left( x, \frac{x'}{\| x' \|^{2}} \right),
\]
and $\phi_{i}^{-1} \circ \phi_{j} = \phi_{j}^{-1} \circ \phi_{i}$ for all $i$ and $j$.

Equip $S^{2}$ with the Riemannian metric $g$ inherited from the standard one on $\mathbb{R}^{3}$. Equip $S^{2} \times S^{2}$ with the product metric. On each $D_{i}$, the metric has the form
\[
g = 4 (1+ \| x \|^{2})^{-2} \sum_{\alpha = 1}^{2} \mathrm{d} x_{\alpha} \otimes \mathrm{d} x_{\alpha} + 4 (1+ \| x' \|^{2})^{-2} \sum_{\alpha' = 1}^{2} \mathrm{d} x'_{\alpha'} \otimes \mathrm{d} x'_{\alpha'},
\]
and
\[
\Delta v = 4^{-1} (1+ \| x \|^{2})^{2} \sum_{\alpha=1}^{2} \frac{\partial^{2} v}{\partial x_{\alpha}^{2}} + 4^{-1} (1+ \| x' \|^{2})^{2} \sum_{\alpha' =1}^{2} \frac{\partial^{2} v}{\partial {x'}_{\alpha'}^{2}}.
\]

Consider the model problem \eqref{eqn_problem} on $S^{2} \times S^{2}$ with $b>0$. Choose the true solution $u$ to \eqref{eqn_problem} as
\[
u= y_{3} + y'_{3},
\]
where $S^{2} \times S^{2} \subset \mathbb{R}^{3} \times \mathbb{R}^{3}$, and $y_{3}$ (resp. $y'_{3}$) is the $3$rd coordinate of the first (resp. second) factor $\mathbb{R}^{3}$. Then
\[
f=(2+b) u
\]
in \eqref{eqn_problem}. On $D_{i}$, the true solution $u$ has the expression
\[
u \circ \phi_{1} = \frac{1 - \|x\|^{2}}{1 + \|x\|^{2}} + \frac{1 - \|x'\|^{2}}{1 + \|x'\|^{2}}, \qquad u \circ \phi_{2} = \frac{1 - \|x\|^{2}}{1 + \|x\|^{2}} + \frac{-1 + \|x'\|^{2}}{1 + \|x'\|^{2}},
\]
\[
u \circ \phi_{3} = \frac{-1 + \|x\|^{2}}{1 + \|x\|^{2}} + \frac{1 - \|x'\|^{2}}{1 + \|x'\|^{2}}, \qquad u \circ \phi_{4} = \frac{-1 + \|x\|^{2}}{1 + \|x\|^{2}} + \frac{-1 + \|x'\|^{2}}{1 + \|x'\|^{2}}.
\]
The weak form of \eqref{eqn_problem} on $D_{i}$ is formulated as: $\forall v \in H^{1}_{0} (D_{i})$,
\begin{eqnarray*}
&  & \int_{D_{i}} \left[ 4 (1 + \|x'\|^{2})^{-2} \sum_{\alpha =1}^{2} \frac{\partial u \circ \phi_{i}}{\partial x_{\alpha}} \frac{\partial v}{\partial x_{\alpha}} + 4 (1 + \|x\|^{2})^{-2} \sum_{\alpha' =1}^{2} \frac{\partial u \circ \phi_{i}}{\partial x'_{\alpha'}} \frac{\partial v}{\partial x'_{\alpha'}} \right. \\
&  & \left. + 16 (1 + \|x\|^{2})^{-2} (1 + \|x'\|^{2})^{-2} b \cdot u \circ \phi_{i} \cdot v \right] \mathrm{d} x_{1} \mathrm{d} x_{2} \mathrm{d} x'_{1} \mathrm{d} x'_{2} \\
& = & \int_{D_{i}} 16 (1 + \|x\|^{2})^{-2} (1 + \|x'\|^{2})^{-2} f \circ \phi_{i} \cdot v \mathrm{d} x_{1} \mathrm{d} x_{2} \mathrm{d} x'_{1} \mathrm{d} x'_{2}.
\end{eqnarray*}

Now we choose $b$ in \eqref{eqn_problem} as $2$. The numerical results are as follows in Tables \ref{table:s2s2-r1.2}, \ref{table:s2s2-r2}, \ref{table:s2s2-r1.2-tl} and \ref{table:s2s2-r2-tl}. The performance of our algorithm on $S^{2} \times S^{2}$ is similar to that on $S^{4}$ and $\mathbb{CP}^{2}$.

\begin{table}[ht]
\begin{center}
\begin{tabular}{|c|cc|cc|cc|cc|c|} \hline
$h$ & \multicolumn{2}{c|}{$\|I_{h} u -  u_{h}^{\infty} \|_{L^{\infty}}$} & \multicolumn{2}{c|}{$\|I_{h} u -  u_{h}^{\infty} \|_{L^{2}}$} & \multicolumn{2}{c|}{$| I_{h} u - u_{h}^{\infty} |_{H^{1}}$} & \multicolumn{2}{c|}{$\| I_{h} u - u_{h}^{\infty} \|_{a}$} & $n_{0}$ \\
\hline
$0.24$ & $0.0207$ & & $0.0588$ & & $0.1671$ & & $0.2175$ & & $22$ \\
\hline
$0.12$ & $0.0045$ & $2.2$ & $0.0144$ & $2.0$ & $0.0479$ & $1.8$ & $0.0606$ & $1.8$ & $22$ \\
\hline
$0.06$ & $0.0012$ & $1.9$ & $0.0036$ & $2.0$ & $0.0135$ & $1.8$ & $0.0164$ & $1.9$ & $22$ \\
\hline
$0.03$ & $3.1235e-4$ & $1.9$ & $8.6478e-4$ & $2.1$ & $0.0045$ & $1.6$ & $0.0053$ & $1.6$ & $21$ \\
\hline
\end{tabular}
\end{center}
\caption{Convergence result on $S^{2} \times S^{2}$ for $r=1.2$.}
\label{table:s2s2-r1.2}
\end{table}

\begin{table}[ht]
\begin{center}
\begin{tabular}{|c|cc|cc|cc|cc|c|} \hline
$h$ & \multicolumn{2}{c|}{$\|I_{h} u -  u_{h}^{\infty} \|_{L^{\infty}}$} & \multicolumn{2}{c|}{$\|I_{h} u -  u_{h}^{\infty} \|_{L^{2}}$} & \multicolumn{2}{c|}{$| I_{h} u - u_{h}^{\infty} |_{H^{1}}$} & \multicolumn{2}{c|}{$\| I_{h} u - u_{h}^{\infty} \|_{a}$} & $n_{0}$ \\
\hline
$0.4$ & $0.1452$ & & $0.9763$ & & $1.1952$ & & $1.0766$ & & $9$ \\
\hline
$0.2$ & $0.0234$ & $2.6$ & $0.1985$ & $2.3$ & $0.3646$ & $1.7$ & $0.3014$ & $1.8$ & $9$ \\
\hline
$0.1$ & $0.0090$ & $1.4$ & $0.0558$ & $1.8$ & $0.1176$ & $1.6$ & $0.0884$ & $1.8$ & $9$ \\
\hline
$0.05$ & $0.0016$ & $2.5$ & $0.0132$ & $2.1$ & $0.0374$ & $1.7$ & $0.0268$ & $1.7$ & $9$ \\
\hline
\end{tabular}
\end{center}
\caption{Convergence result on $S^{2} \times S^{2}$ for $r=2$.}
\label{table:s2s2-r2}
\end{table}

\begin{table}[ht]
\begin{center}
\begin{tabular}{|c|c|c|c|c|c|}
\hline
$h$ & $\|I_{h} u -  u_{h}^{n} \|_{L^{\infty}}$ & $\|I_{h} u -  u_{h}^{n} \|_{L^{2}}$ & $| I_{h} u - u_{h}^{n} |_{H^{1}}$ & $\| I_{h} u - u_{h}^{n} \|_{a}$ & $n$ \\
\hline
$0.24$ & $0.0334$ & $0.0975$ & $0.1543$ & $0.2640$ & $5$ \\
\hline
$0.12$ & $0.0063$ & $0.0215$ & $0.0448$ & $0.0687$ & $7$ \\
\hline
$0.06$ & $0.0022$ & $0.0079$ & $0.0128$ & $0.0214$ & $8$ \\
\hline
$0.03$ & $4.8270e-4$ & $0.0017$ & $0.0044$ & $0.0059$ & $10$ \\
\hline
\end{tabular}
\end{center}
\caption{Convergence result on $S^{2} \times S^{2}$ for $r=1.2$.}
\label{table:s2s2-r1.2-tl}
\end{table}

\begin{table}[ht]
\begin{center}
\begin{tabular}{|c|c|c|c|c|c|}
\hline
$h$ & $\|I_{h} u -  u_{h}^{n} \|_{L^{\infty}}$ & $\|I_{h} u -  u_{h}^{n} \|_{L^{2}}$ & $| I_{h} u - u_{h}^{n} |_{H^{1}}$ & $\| I_{h} u - u_{h}^{n} \|_{a}$ & $n$ \\
\hline
$0.4$ & $0.2436$ & $1.1576$ & $1.3937$ & $1.4708$ & $2$ \\
\hline
$0.2$ & $0.0296$ & $0.1829$ & $0.3708$ & $0.3186$ & $3$ \\
\hline
$0.1$ & $0.0088$ & $0.0546$ & $0.1175$ & $0.0893$ & $4$ \\
\hline
$0.05$ & $0.0021$ & $0.0120$ & $0.0375$ & $0.0281$ & $4$ \\
\hline
\end{tabular}
\end{center}
\caption{Convergence result on $S^{2} \times S^{2}$ for $r=2$.}
\label{table:s2s2-r2-tl}
\end{table}

\section*{Acknowledgements}
We thank Feng Wang, Yuanming Xiao and Xuejun Xu for various discussions. We thank the anonymous reviewers for many constructive
comments and suggestions which led to an improved presentation of this paper. S. Cao was partially supported by NSF awards DMS-2136075 and DMS-2309778. L. Qin was partially supported by NSFC11871272.

% \bibliographystyle{siamplain}
% \bibliography{./manifold_fem.bib}

\begin{thebibliography}{10}

\bibitem{2021MFEM}
{\sc R.~Anderson, J.~Andrej, A.~Barker, J.~Bramwell, J.-S. Camier, J.~Cerveny,
  V.~Dobrev, Y.~Dudouit, A.~Fisher, T.~Kolev, W.~Pazner, M.~Stowell, V.~Tomov,
  I.~Akkerman, J.~Dahm, D.~Medina, and S.~Zampini}, {\em Mfem: A modular finite
  element methods library}, Computers \& Mathematics with Applications, 81
  (2021), pp.~42--74,
  \url{https://www.sciencedirect.com/science/article/pii/S0898122120302583}.
\newblock Development and Application of Open-source Software for Problems with
  Numerical PDEs.

\bibitem{antonietti2015high}
{\sc P.~F. Antonietti, A.~Dedner, P.~Madhavan, S.~Stangalino, B.~Stinner, and
  M.~Verani}, {\em High order discontinuous Galerkin methods for elliptic
  problems on surfaces}, SIAM Journal on Numerical Analysis, 53 (2015),
  pp.~1145--1171.

\bibitem{bachini2021intrinsic}
{\sc E.~Bachini, M.~W. Farthing, and M.~Putti}, {\em Intrinsic finite element
  method for advection-diffusion-reaction equations on surfaces}, Journal of
  Computational Physics, 424 (2021), p.~109827.

\bibitem{bachini2021arbitrary}
{\sc E.~Bachini, G.~Manzini, and M.~Putti}, {\em Arbitrary-order intrinsic
  virtual element method for elliptic equations on surfaces}, Calcolo, 58
  (2021), pp.~1--28.

\bibitem{2007dealii}
{\sc W.~Bangerth, R.~Hartmann, and G.~Kanschat}, {\em deal.II-a
  general-purpose object-oriented finite element library}, ACM Transactions on
  Mathematical Software (TOMS), 33 (2007), pp.~24--es.

\bibitem{baumgardner_frederickson}
{\sc J.~R. Baumgardner and P.~O. Frederickson}, {\em Icosahedral discretization
  of the two-sphere}, SIAM Journal on Numerical Analysis, 22 (1985),
  pp.~1107--1115.

\bibitem{bonito2020divergence}
{\sc A.~Bonito, A.~Demlow, and M.~Licht}, {\em A divergence-conforming finite
  element method for the surface Stokes equation}, SIAM Journal on Numerical
  Analysis, 58 (2020), pp.~2764--2798.

\bibitem{BDN20}
{\sc A.~Bonito, A.~Demlow, and R.~H. Nochetto}, {\em Finite element methods for
  the Laplace--Beltrami operator}, in Handbook of Numerical Analysis, vol.~21,
  Elsevier, 2020, pp.~1--103.

\bibitem{BPWX}
{\sc J.~H. Bramble, J.~E. Pasciak, J.~P. Wang, and J.~Xu}, {\em Convergence
  estimates for product iterative methods with applications to domain
  decomposition}, Mathematics of Computation, 57 (1991), pp.~1--21.

\bibitem{brannick2022bootstrap}
{\sc J.~Brannick and S.~Cao}, {\em A bootstrap multigrid eigensolver}, SIAM
  Journal on Matrix Analysis and Applications, 43 (2022), pp.~1627--1657.

\bibitem{1996CaiSaadOverlapping}
{\sc X.-C. Cai and Y.~Saad}, {\em Overlapping domain decomposition algorithms
  for general sparse matrices}, Numerical linear algebra with applications, 3
  (1996), pp.~221--237.

\bibitem{ciarlet}
{\sc P.~G. Ciarlet}, {\em The Finite Element Method for Elliptic Problems}
  SIAM, 2002.

\bibitem{conway}
{\sc J.~B. Conway}, {\em A course in functional analysis}, Second edition, Grad. Texts in Math., 96, Springer-Verlag, New York, 1990.

\bibitem{DDE05}
{\sc K.~Deckelnick, G.~Dziuk, and C.~M. Elliott}, {\em Computation of geometric
  partial differential equations and mean curvature flow}, Acta numerica, 14
  (2005), pp.~139--232.

\bibitem{demlow2009higher}
{\sc A.~Demlow}, {\em Higher-order finite element methods and pointwise error
  estimates for elliptic problems on surfaces}, SIAM Journal on Numerical
  Analysis, 47 (2009), pp.~805--827.

\bibitem{demlow2007adaptive}
{\sc A.~Demlow and G.~Dziuk}, {\em An adaptive finite element method for the
  Laplace--Beltrami operator on implicitly defined surfaces}, SIAM Journal on
  Numerical Analysis, 45 (2007), pp.~421--442.

\bibitem{dobrev2010surface}
{\sc V.~Dobrev, J.-L. Guermond, and B.~Popov}, {\em Surface reconstruction and
  image enhancement via $L^{1}$-minimization}, SIAM Journal on Scientific
  Computing, 32 (2010), pp.~1591--1616.

\bibitem{Dobrev2021}
{\sc V.~Dobrev, P.~Knupp, T.~Kolev, K.~Mittal, and V.~Tomov}, {\em Adaptive
  tangential relaxation and surface fitting for high-order mesh optimization},
  in 10th International Conference on Adaptative Modeling and Simulation,
  {CIMNE}, 2021, \url{https://doi.org/10.23967/admos.2021.015},
  \url{https://doi.org/10.23967/admos.2021.015}.

\bibitem{dziuk88}
{\sc G.~Dziuk}, {\em Finite elements for the Beltrami operator on arbitrary
  surfaces}, Partial differential equations and calculus of variations,
  (1988), pp.~142--155.

\bibitem{dziuk91}
{\sc G.~Dziuk}, {\em An algorithm for evolutionary surfaces}, Numerische
  Mathematik, 58 (1990), pp.~603--611.

\bibitem{dziuk2007finite}
{\sc G.~Dziuk and C.~M. Elliott}, {\em Finite elements on evolving surfaces},
  IMA journal of numerical analysis, 27 (2007), pp.~262--292.

\bibitem{dziuk_elliott}
{\sc G.~Dziuk and C.~M. Elliott}, {\em Finite element methods for surface
  PDEs}, Acta Numerica, 22 (2013), pp.~289--396.

\bibitem{elliott2015evolving}
{\sc C.~M. Elliott and T.~Ranner}, {\em Evolving surface finite element method
  for the Cahn--Hilliard equation}, Numerische Mathematik, 129 (2015),
  pp.~483--534.

\bibitem{engquist1998absorbing}
{\sc B.~Engquist and H.-K. Zhao}, {\em Absorbing boundary conditions for domain
  decomposition}, Applied numerical mathematics, 27 (1998), pp.~341--365.

\bibitem{etzmuss2003fast}
{\sc O.~Etzmu{\ss}, M.~Keckeisen, and W.~Stra{\ss}er}, {\em A fast finite
  element solution for cloth modelling}, in 11th Pacific Conference onComputer
  Graphics and Applications, 2003. Proceedings., IEEE, 2003, pp.~244--251.

\bibitem{2000CGAL}
{\sc A.~Fabri, G.-J. Giezeman, L.~Kettner, S.~Schirra, and S.~Sch{\"o}nherr},
  {\em On the design of cgal a computational geometry algorithms library},
  Software: Practice and Experience, 30 (2000), pp.~1167--1202.

\bibitem{frittelli2018virtual}
{\sc M.~Frittelli and I.~Sgura}, {\em Virtual element method for the
  Laplace-Beltrami equation on surfaces}, ESAIM: Mathematical Modelling and
  Numerical Analysis, 52 (2018), pp.~965--993.

\bibitem{griffiths_harris}
{\sc P.~Griffiths and J.~Harris}, {\em Principles of Algebraic Geometry}, John
  Wiley \& Sons, 2014.

\bibitem{hirsch}
{\sc M.~W. Hirsch}, {\em Differential Topology}, vol.~33, Springer Science \&
  Business Media, 2012.

\bibitem{jankuhn2021error}
{\sc T.~Jankuhn, M.~A. Olshanskii, A.~Reusken, and A.~Zhiliakov}, {\em Error
  analysis of higher order trace finite element methods for the surface Stokes
  equation}, Journal of Numerical Mathematics, 29 (2021), pp.~245--267.

\bibitem{jin2021gradient}
{\sc M.~Jin, X.~Feng, and K.~Wang}, {\em Gradient recovery-based adaptive
  stabilized mixed fem for the convection--diffusion--reaction equation on
  surfaces}, Computer Methods in Applied Mechanics and Engineering, 380 (2021),
  p.~113798.

\bibitem{jost}
{\sc J.~Jost}, {\em Riemannian Geometry and Geometric Analysis}, vol.~42005,
  Springer, 2008.

\bibitem{juttler2016numerical}
{\sc B.~J{\"u}ttler, A.~Mantzaflaris, R.~Perl, and M.~Rumpf}, {\em On numerical
  integration in isogeometric subdivision methods for pdes on surfaces},
  Computer Methods in Applied Mechanics and Engineering, 302 (2016),
  pp.~131--146.

\bibitem{kang2016isogeometric}
{\sc P.~Kang and S.-K. Youn}, {\em Isogeometric topology optimization of shell
  structures using trimmed nurbs surfaces}, Finite Elements in Analysis and
  Design, 120 (2016), pp.~18--40.

\bibitem{2017LiSaadLow}
{\sc R.~Li and Y.~Saad}, {\em Low-rank correction methods for algebraic domain
  decomposition preconditioners}, SIAM Journal on Matrix Analysis and
  Applications, 38 (2017), pp.~807--828.

\bibitem{lions1}
{\sc P.-L. Lions}, {\em On the Schwarz alternating method. $\text{I}$}, in
  First international symposium on domain decomposition methods for partial
  differential equations, vol.~1, Paris, France, 1988, p.~42.

\bibitem{milnor_stasheff}
{\sc J.~W. Milnor and J.~D. Stasheff}, {\em Characteristic Classes}, no.~76 in
  Annals of Mathematics Studies, Princeton university press, 1974.

\bibitem{mohamed2005finite}
{\sc A.~Mohamed and C.~Davatzikos}, {\em Finite element modeling of brain tumor
  mass-effect from 3d medical images}, in International conference on medical
  image computing and computer-assisted intervention, Springer, 2005,
  pp.~400--408.

\bibitem{munkres}
{\sc J.~R. Munkres}, {\em Elementary Differential Topology.(AM-54), Volume 54},
  vol.~54, Princeton University Press, 2016.

\bibitem{nash}
{\sc J.~Nash}, {\em The imbedding problem for $\text{R}$iemannian manifolds},
  Annals of mathematics,  (1956), pp.~20--63.

\bibitem{nedelec}
{\sc J.~Nedelec}, {\em Curved finite element methods for the solution of
  singular integral equations on surfaces in r3}, Computer Methods in Applied
  Mechanics and Engineering, 8 (1976), pp.~61--80.

\bibitem{nedelec_planchard}
{\sc J.-C. Nedelec and J.~Planchard}, {\em Une m\'ethode variationnelle
  d{\textquoteright}\'el\'ements finis pour la r\'esolution num\'erique
  d{\textquoteright}un probl\`eme ext\'erieur dans $\mathbf {R}^3$}, Revue
  fran\c{c}aise d'automatique, informatique, recherche op\'erationnelle.
  Analyse num\'erique, 7 (1973), pp.~105--129.

\bibitem{olshanskii2017trace}
{\sc M.~A. Olshanskii and A.~Reusken}, {\em Trace finite element methods for
  PDEs on surfaces}, in Geometrically unfitted finite element methods and
  applications, Springer, 2017, pp.~211--258.

\bibitem{olshanskii2009finite}
{\sc M.~A. Olshanskii, A.~Reusken, and J.~Grande}, {\em A finite element method
  for elliptic equations on surfaces}, SIAM Journal on numerical analysis, 47
  (2009), pp.~3339--3358.

\bibitem{qin_xu}
{\sc L.~Qin and X.~Xu}, {\em Optimized Schwarz methods with Robin transmission conditions for parabolic problems}, SIAM Journal on Scientific Computing, 31 (2008),
  pp.~608-623.

\bibitem{qin_zhang_zhang}
{\sc L.~Qin, S.~Zhang, and Z.~Zhang}, {\em Finite element formulation in flat
  coordinate spaces to solve elliptic problems on general closed riemannian
  manifolds}, SIAM Journal on Scientific Computing, 36 (2014),
  pp.~A2149--A2165.

\bibitem{reusken2015analysis}
{\sc A.~Reusken}, {\em Analysis of trace finite element methods for surface
  partial differential equations}, IMA Journal of Numerical Analysis, 35
  (2015), pp.~1568--1590.

\bibitem{reuter2010hierarchical}
{\sc M.~Reuter}, {\em Hierarchical shape segmentation and registration via
  topological features of Laplace-Beltrami eigenfunctions}, International
  Journal of Computer Vision, 89 (2010), pp.~287--308.

\bibitem{reuter2009discrete}
{\sc M.~Reuter, S.~Biasotti, D.~Giorgi, G.~Patan{\`e}, and M.~Spagnuolo}, {\em
  {Discrete Laplace--Beltrami operators for shape analysis and segmentation}},
  Computers \& Graphics, 33 (2009), pp.~381--390.

\bibitem{schoen_yau}
{\sc R.~M. Schoen and S.-T. Yau}, {\em Lectures on Differential Geometry},
  vol.~2, International press Cambridge, MA, 1994.

\bibitem{schwarz}
{\sc H.~A. Schwarz}, {\em Ueber einige Abbildungsaufgaben}, Journal f\"{u}r die Reine und Angewandte Mathematik, 70 (1869), 105-120.

\bibitem{toselli_widlund}
{\sc A.~Toselli and O.~Widlund}, {\em Domain Decomposition Methods-Algorithms
  and Theory}, vol.~34, Springer Science \& Business Media, 2004.

\bibitem{whitehead}
{\sc J.~H.~C. Whitehead}, {\em On $C^{1}$-complexes}, Annals of Mathematics,
  (1940), pp.~809--824.

\bibitem{whitney36}
{\sc H.~Whitney}, {\em Differentiable manifolds}, Annals of Mathematics,
  (1936), pp.~645--680.

\bibitem{whitney44}
{\sc H.~Whitney}, {\em The self-intersections of a smooth $n$-manifold in
  $2n$-space}, Annals of Mathematics,  (1944), pp.~220--246.

\bibitem{xu_zikatanov}
{\sc J.~Xu and L.~Zikatanov}, {\em The method of alternating projections and
  the method of subspace corrections in Hilbert space}, Journal of the American
  Mathematical Society, 15 (2002), pp.~573--597.

\end{thebibliography}

\end{document}